\newcommand\restr[2]{{
  \left.\kern-\nulldelimiterspace 
  #1 
  \vphantom{\big|} 
  \right|_{#2} 
  }}
\newtheorem{theorem}{Theorem}[section]
\newtheorem{lemma}[theorem]{Lemma}
\newtheorem{proposition}[theorem]{Proposition}
\newtheorem{corollary}[theorem]{Corollary}
\theoremstyle{definition}
\newtheorem{definition}[theorem]{Definition}
\newtheorem{remark}[theorem]{Remark}
\newtheorem{example}[theorem]{Example}
\newtheorem{claim}[theorem]{Claim}
\begin{document}
\def\hii{3}
\title{Title Here}

\author{Geoffrey Scott}
\thanks{The author was partially supported by NSF RTG grant DMS-1045119}
\thanks{The author was partially supported by NSF RTG grant DMS-0943832}
\email{gsscott@umich.edu}

\title{The Geometry of $b^k$ Manifolds}

\begin{abstract}

Let $Z$ be a hypersurface of a manifold $M$. The $b$-tangent bundle of $(M, Z)$, whose sections are vector fields tangent to $Z$, is used to study pseudodifferential operators and stable poisson structures on $M$. In this paper we introduce the $b^k$-tangent bundle, whose sections are vector fields with ``order $k$ tangency'' to $Z$. We describe the geometry of this bundle and its dual, generalize the celebrated Mazzeo-Melrose theorem of the de Rham theory of $b$-manifolds, and apply these tools to classify certain Poisson structurse on compact oriented surfaces.
\vskip 0.5\baselineskip

\end{abstract}

\maketitle

\section{Introduction}
Melrose developed the $b$-calculus to study pseudodifferential operators on noncompact manifolds (\cite{melrose}, \cite{grieser}). Considering the manifold in question as the interior of a manifold $M$ with boundary, he constructed the {\it $b$-tangent bundle} ${^{b}}TM$ whose sections are vector fields on $M$ tangent to $\partial M$, and the {\it $b$-cotangent bundle} ${^{b}}T^*M$, whose sections are differential forms with a specific kind of order-one singularity at $\partial M$. The authors of \cite{gmp2} have recently applied these ideas to study global Poisson geometry; in this context, ${^b}TM$ and ${^b}T^*M$ are defined on a manifold $M$ with a distinguished hypersurface $Z$ rather than on a manifold with boundary\footnote{These competing perspectives can be reconciled by viewing a manifold with boundary $M$ as one half of its double. In doing so, the boundary of $M$ corresponds to a hypersurface of the double. In this paper, we follow the precedent of \cite{gmp2} and define our bundles over manifolds with distinguished hypersurfaces.}, and sections of ${^b}TM$ (and ${^b}T^*M$) are vector fields (and differential forms) tangent to $Z$ (or singular at $Z$). In this paper, we generalize this construction so that vector fields and differential forms with higher order tangency and higher order singularity may also be realized as sections of bundles.

The construction of these bundles in Section 2 is subtle: although we wish to define a $b^k$-vector field as a vector field with an ``order $k$ tangency to $Z$,'' there is no straightforward way to rigorously define this notion. To do so, we must include in the definition of a $b^k$-manifold the data of a $(k-1)$-jet of $Z$ (and insist that the morphisms in the $b^k$-category preserve this jet). We then define a $b^k$-vector field as a vector field $v$ such that $\mathcal{L}_{v}(f)$ vanishes to order $k$ for functions $f$ that represent the jet data. Then we define the $b^k$-tangent bundle ${^{b^k}}\hspace{-1pt}TM$ as the vector bundle whose sections are $b^k$-vector fields, and the $b^k$-cotangent bundle ${^{b^k}}\hspace{-1pt}T^*M$ as its dual. When $k = 1$, these are the familiar constructions from \cite{melrose} and \cite{gmp2}.

In Section 3 we study the geometry of the fibers of ${^{b^k}}\hspace{-1pt}TM$ and ${^{b^k}}\hspace{-1pt}T^*M$. Recall from \cite{gmp2} that the fibers of ${^b}TM$ and ${^b}T^*M$ satisfy 
\[
{^{b}}T_pM \cong \left\{ \begin{array}{c l}
T_pM  &\textrm{for} \ p \notin Z\\
T_pZ + \langle y \frac{\partial}{\partial y} \rangle   &\textrm{for} \ p \in Z \end{array}\right.
\textrm{ \ \  \ \ }
{^{b}}T_p^*M \cong \left\{ \begin{array}{c l}
T_p^*M & \textrm{for} \ p \notin Z\\
T_p^*Z + \langle \frac{dy}{y} \rangle  & \textrm{for} \ p \in Z \end{array}\right.
\]
\noindent where $y$ is a defining function for $Z$. Similarly, we show that the fibers of ${^{b^k}}TM$ and ${^{b^k}}T^*M$ satisfy 
\[
{^{b^k}}T_pM \cong \left\{ \begin{array}{c l}
T_pM & \textrm{for} \ p \notin Z\\
T_pZ + \langle y^k \frac{\partial}{\partial y} \rangle  & \textrm{for} \ p \in Z \end{array}\right.
\textrm{ \ \ \ \ }
{^{b^k}}T_p^*M \cong \left\{ \begin{array}{c l}
T_p^*M & \textrm{for} \ p \notin Z\\
T_p^*Z + \langle \frac{dy}{y^k} \rangle  & \textrm{for} \ p \in Z \end{array}\right.
\]
where $y$ is a local defining function for $Z$ that represents the jet data of the $b^k$-manifold.

In Section 4 we define a differential on the complex of {\it $b^k$-forms} (sections of the exterior algebra of ${^{b^k}}\hspace{-1pt}T^*M$) and prove a Mazzeo-Melrose type theorem for the cohomology ${^{b^k}}H^*(M)$ of this complex. 
\begin{equation}\label{nci}
{^{b^k}}\hspace{-1pt}H^p(M) \cong H^p(M) \oplus \left( H^{p-1}(Z) \right)^k
\end{equation}
However, this isomorphism (like that of the classic Mazzeo-Melrose theorem) is non-canonical. By defining the {\it Laurent Series} of a $b^k$-form, which expresses a $b^k$-form as a sum of simpler $b^{\ell}$-forms (for $\ell \leq k$), we show that there {\it is} a way to construct the isomorphism in Equation \ref{nci} so that the $\left( H^{p-1}(Z) \right)^k$ summand of a $b^k$-cohomology class {\it is} canonically defined.

In Section 5, we study the geometry of $b^k$-forms of top degree. In \cite{radko}, the author defined the {\it Liouville volume} of a $b$-form of top degree as a certain principal value of the form. This invariant was featured in her classification theorem of stable Poisson structures on compact surfaces. We generalize this by defining the {\it volume polynomial} of a $b^k$-form of top degree. This polynomial encodes the asymptotic behavior of the integral of the form near $Z$. We define the {\it Liouville volume} as the constant term of this polynomial -- it agrees with the classic definition of Liouville volume when $k = 1$. We can also take the Liouville volume of a degree $p$ $b^k$-form along any $p$ dimensional submanifold of $M$. Citing Poincar\'{e} duality, we define the {\it smooth part} of a $b^k$ cohomology class $[\omega]$ to be the de Rham cohomology class whose integrals along $p$-cycles equal the Liouville volumes of $\omega$ along these cycles. At the end of Section 5, we use these tools to realize the abstract isomorphism in Equation \ref{nci} with an explicit canonical map. The image of a $b^k$ form under this map is its {\it Liouville-Laurent decomposition}.

In Section 6, we define a symplectic $b^k$-form as a closed $b^k$ 2-form having full rank (when $k = 1$, these are frequently called {\it log symplectic} forms), and prove the classic Moser theorems in the $b^k$ category. We also revisit the classification theorems of stable Poisson structures on compact oriented surfaces from \cite{radko} and  \cite{gmp2}. Radko classifies stable Poisson structures using geometric data, while the authors of \cite{gmp2} use cohomological data; in this paper, we show how the Liouville-Laurent decomposition relates the geometric data to the cohomological data.

This paper ends in Section 7 with an example of how the theory of $b^k$-manifolds can answer questions from outside $b^k$-geometry. Let $\Pi$ be a Poisson structure on a manifold $M$ whose rank differs from $\textnormal{dim}(M)$ precisely on a hypersurface $Z$. We say that $\Pi$ is {\it of $b^k$-type} if it is dual to a symplectic $b^k$-form for {\it some} choice of jet data (respectively, we say that the symplectic form on $M \backslash Z$ is {\it of $b^k$-type}). When $M$ is a surface, this means that the Poisson bivector $\Pi$ is given by $f\Pi_0$ where $\Pi_0$ is dual to a symplectic form, and $f$ is locally the $k^{\textrm{th}}$ power of a defining function of $Z$. We give a condition for two such Poisson structures on a compact surface to be isomorphic in terms of the summands in their respective Liouville-Laurent decompositions.

{\bf Acknowledgements:} I thank both Daniel Burns and Victor Guillemin for helpful discussions, and the latter for introducing me to Melrose's $b$-calculus. Also, to the staff of 1369 Coffee House who tolerated my near-residence of their establishment while preparing this paper -- thank you.

\section{Preliminaries}

In this section, we establish notation pertaining to jet bundles, review definitions from the theory of $b$-manifolds, and generalize these definitions. All manifolds, maps, and vector fields are assumed to be smooth.

\subsection{Notation}
Let $i: Z \rightarrow M$ be the inclusion of a hypersurface into a manifold, let $C^{\infty}$ be the sheaf of smooth functions on $M$, and let $\mathcal{I}_Z \subseteq C^{\infty}$ be the ideal sheaf of $Z$.
\begin{definition} The {\bf sheaf of germs} at $Z$ is $i^{-1}(C^{\infty})$; a {\bf germ} at $Z$ is a global section of this sheaf. The {\bf sheaf of $k$-jets} at $Z$ is $\mathcal{J}_Z^k := i^{-1}(C^{\infty} / \mathcal{I}_Z^{k+1})$; a {\bf $k$-jet} at $Z$ is a global section of this sheaf.
\end{definition}

We will write $J^k_{Z}$ (or simply $J^k$) to denote the $k$-jets at $Z$, and $I_Z$ (or simply $I$) to denote the global sections of $i^{-1}(\mathcal{I}_Z)$. We write $[f]_Z^k$ (or simply $[f]^k$) to denote the $k$-jet represented by a smooth function $f$ defined in a neighborhood of $Z$. Also, if $j$ is a $k$-jet, we write $f \in j$ if $f$ represents $j$ and $f \in I^k$ if $f$ represents an element of $I^k$ (equivalently, if $[f]^{k-1} = 0$).

\subsection{Definitions}
In \cite{gmp2}, the authors use $b$-manifolds to study symplectic forms having order-one singularities along a hypersurface. In this paper, we generalize these techniques to study symplectic forms having higher-order singularities. We begin by recalling the basic definitions from the theory of $b$-manifolds. See \cite{gmp2} for more exposition.
\begin{definition} A {\bf $b$-manifold} is a pair $(M, Z)$ of a smooth oriented manifold $M$ and an oriented hypersurface $Z \subseteq M$ such that $Z = \{f = 0\}$ for some global defining function $f: M \rightarrow \mathbb{R}$.
\end{definition}
\begin{definition} A {\bf $b$-map} from $(M, Z)$ to $(M', Z')$ is a map $\varphi: M \rightarrow M'$ such that $\varphi^{-1}(Z') = Z$ and $\varphi$ is transverse to $Z'$.
\end{definition}
\begin{definition} A {\bf $b$-vector field} on $(M, Z)$ is a vector field $v$ on $M$ such that $v_p \in T_pZ$ for all $p \in Z$.
\end{definition}
\begin{definition} The {\bf $b$-tangent bundle} $^bTM$ on $(M, Z)$ is the vector bundle whose sections are the $b$-vector fields on $(M, Z)$.
\end{definition}
\begin{definition} The {\bf $b$-cotangent bundle} $^bT^*M$ is the dual bundle of $^bTM$.
\end{definition}

The authors of \cite{gmp2} show that sections of the exterior algebra of the $b$-cotangent bundle are differential forms on $M$ with a certain kind of order-one singularity at $Z$. Towards the goal of constructing similar bundles to study differential forms with higher-order singularities, we wish to define a $b^k$-vector field as a vector field ``tangent to order $k$ on $Z$.'' However, the next example shows that the na\"ive definition of being ``tangent to order $k$ on $Z$'' (as a vector field $v$ such that $\mathcal{L}_{v}(f) \in I^k$ for a defining function $f$ of $Z$) is ill-defined.

\begin{example} 
On the $b$-manifold $(M, Z) = (\{(x, y) \in \mathbb{R}^2\}, \{y = 0\})$, two different defining functions for $Z$ are given by $y$ and $e^xy$. The vector field $v = \frac{\partial}{\partial x}$ satisfies 
\[
\mathcal{L}_{v}(y) = 0 \in I^2
\hspace{1cm}
\textrm{and}
\hspace{1cm}
\mathcal{L}_{v}(e^xy) = e^xy \notin I^2
\]
so the order of vanishing of the Lie derivative of a defining function depends on the choice of defining function. 
\end{example}
This phenomenon prevents us from emulating the \cite{gmp2} paper mutatis mutandis; we must endow our $b$-manifolds with additional data to make possible the definition of a $b^k$-vector field.

\begin{definition} For $k \geq 1$, a {\bf $b^k$-manifold} is a triple $(M, Z, j_Z)$ where 
\begin{itemize}
\item $M$ is an oriented manifold.
\item $Z \subseteq M$ is an oriented hypersurface.
\item $j_Z$ is an element of $J^{k-1}_Z$ that can be represented by a positively oriented local defining function $y$ for $Z$ (that is, if $\Omega_Z$ is a positively oriented volume form of $Z$, then $dy \wedge \Omega_Z$ is positively oriented for $M$)
\end{itemize}
\end{definition}

If $k > 1$ and a function shares the same $(k-1)$-jet as a positively oriented local defining function for $Z$, then it itself is a positively oriented local defining function for $Z$. In this case, any $f \in j_{Z}$ is a positively oriented local defining function for $Z$. When $k = 1$ the jet data $\{j_{Z}\}$ is vacuous (because any local defining function for $Z$ represents the trivial $0$-jet), so the definition of a $b^1$-manifold nearly\footnote{We do not demand that $Z = \{f = 0\}$ for some {\it globally}-defined $f$, so the definition of a $b^1$-manifold is slightly more general than the definition of a $b$-manifold given in \cite{gmp2}. However, any symplectic $b^1$-manifold {\em will} have the property that $Z$ is defined by a global function, so the symplectic geometry of $b^1$-manifolds coincides with the symplectic geometry of $b$-manifolds.} agrees with that of a $b$-manifold. 
\begin{definition}
A {\bf $b^k$-map} from $(M, Z, j_{Z})$ to $(M', Z', j_{Z'})$ is a map $\varphi: M \rightarrow M'$ such that $\varphi^{-1}(Z') = Z$, $\varphi$ is transverse to $Z'$, and $\varphi^*(j_{Z'}) = j_{Z}$.
\end{definition}

The interested reader is invited to check that $b^k$-manifolds and $b^k$-maps form a category.

\begin{remark}\label{this_remark} Given a hypersurface $Z \subseteq M$, a vector field $v$ on $M$ with $v_p \in T_pZ$ for all $p \in Z$, and $f \in C^{\infty}(M)$, the jet $[\mathcal{L}_v(f)]^{k-1}$ depends only on $[f]^{k-1}$.
\end{remark}
\begin{proof}
If $[{f}_2]^{k-1} = [f_1]^{k-1}$, then ${f}_2 - f_1 = y^{k}g$ for a local defining function $y$ and some smooth $g$. For a vector field $v$ satisfying $v_p \in T_pZ$, $\mathcal{L}_v(y) \in I$, so
\begin{align*}
[\mathcal{L}_v({f}_2)]^{k-1} &= [\mathcal{L}_v(f_1) + y^{k}\mathcal{L}_v(g) + kgy^{k-1}\mathcal{L}_v(y)]^{k-1}\\
&= [\mathcal{L}_v(f_1)]^{k-1}\textrm{.}
\end{align*}
\end{proof}

Remark \ref{this_remark} shows that the following definition makes sense.

\begin{definition}\label{bk_vectorfield} A {\bf $b^k$-vector field} on $(M, Z, j_Z)$ is a vector field $v$ with $v_p \in T_p(Z)$ for $p \in Z$ such that for any $f \in j_{Z}$, $\mathcal{L}_v(f) \in I^k$.
\end{definition}

To check whether a vector field $v$ is a $b^k$-vector field, it suffices (by Remark \ref{this_remark}) to check that $\mathcal{L}_v(f) \in I^k$ for just one local defining function $f \in j_Z$. The following example shows that Definition \ref{bk_vectorfield} formalizes the notion of a vector field being ``tangent to order $k$'' along a hypersurface.

\begin{example}\label{the_above_example} On the $b^k$ manifold $(\mathbb{R}^n, Z = \{x_n = 0\}, [x_n]^{k-1})$, a vector field $v = \sum_{i = 1}^{n} v_i \frac{\partial}{\partial x_i}$ is a $b^k$-vector field iff
\[
\mathcal{L}_v(x_n)\in I^k
\]
which occurs iff $v_{n} \in I^k$. That is, the $b^k$-vector fields are precisely those of the form
\[
\phi_n x_n^k \frac{\partial}{\partial x_n} + \sum_{i = 1}^{n-1} \phi_i \frac{\partial}{\partial x_i}
\]
for smooth functions $\phi_i$.
\end{example}

On a $b^{k}$-manifold $(M, Z, j_{Z})$, each $p \notin Z$ is contained in a coordinate neighborhood $(U, \{x_1, \dots, x_n\})$ on which $\{\frac{\partial}{\partial x_i}\}$ generate the space of $b^k$-vector fields over $U$ as a free $C^{\infty}(U)$-module. For points $p \in Z$, Example \ref{the_above_example} shows that on a coordinate neighborhood $(U, \{x_1, \dots, x_n\})$ of $p$ with $x_n \in j_Z$, the vector fields
\[
\left\{\frac{\partial}{\partial x_1}, \dots, \frac{\partial}{\partial x_{n-1}}, x_n^k \frac{\partial}{\partial x_n}\right\}
\]
generate the space of ${b}^k$-vector fields over $U$ as a $C^{\infty}(U)$-module. Consequently, $b^k$-vector fields form a projective $C^{\infty}$ module over $M$, as well as a Lie subalgebra of the algebra of vector fields on $M$, so we can realize $b^k$-vector fields as the sections of a bundle on $M$.

We call this bundle $^{{b}^k}TM$ the {\bf $b^k$-tangent bundle}. The dual of this bundle $^{{b}^k}T^*M$ is the {\bf $b^k$-cotangent bundle}. When $k = 1$ we recover the classic definitions of a $b$-vector field and the $b$-(co)tangent bundle. We write ${^{b^k}}\hspace{-2pt}\Omega^p(M)$  for sections of $\wedge^{p} ({^{b^k}}T^*M)$. Elements of ${^{b^k}}\hspace{-2pt}\Omega^p(M)$ are {\bf differential $b^k$-forms}.

\section{Geometry of the $b^k$-(co)tangent bundle}

In this section, we describe the fibers of the $b^k$-(co)tangent bundles and study maps between $b^k$-(co)tangent bundles as $k$ varies. These results will prepare us to study the de Rham theory and the symplectic geometry of $b^k$-manifolds.

Let ${^{b^k}}\textrm{Vect}(M)$ be the space of $b^k$-vector fields and $C_p^{\infty}(M)$ be the ideal of functions vanishing at $p \in M$. We can define ${^{b^k}}T_pM$ intrinsically as
\[
{^{b^k}}T_pM \cong {^{b^k}}\textnormal{Vect}(M) / (C_p^{\infty}(M) {^{b^k}}\textnormal{Vect}(M)).
\]
There is a canonical map that relates the fibers of ${^{b^k}}TM$ to those of $TM$.
\begin{equation}\label{vectorspacemap}
{^{b^k}}T_pM \cong \frac{{^{b^k}}\textnormal{Vect}(M)}{C_p^{\infty}(M) {^{b^k}}\textnormal{Vect}(M)} \rightarrow \frac{\textnormal{Vect}(M)}{C_p^{\infty}(M)\textnormal{Vect}(M)} \cong T_pM
\end{equation}
The results of this section will show that for $p \in Z$, there is a canonical element in the kernel of Map \ref{vectorspacemap} (and dually a canonical element in the quotient ${^{b^k}}T^*_pM / T^*_pM$). Instead of proving these results using this intrinsic description of individual fibers, we will take a more global perspective in order to more closely follow the exposition and results of \cite{gmp2}.

\subsection{Fibers of the $b^k$-(co)tangent Bundle}
Similar to the $b$-manifold case, there are maps between the (co)tangent bundles of $Z$ and the $b^k$-(co)tangent bundles of $M$ restricted to $Z$.
\begin{align}\label{tgt_map}
^{{b}^k}T\restr{M}{Z} &\twoheadrightarrow TZ\\
\label{ctgt_map} ^{{b}^k}T^*\restr{M}{Z} &\hookleftarrow T^*Z
\end{align}

Map \ref{tgt_map} is induced by the map of sections $\Gamma(M, ^{{b}^k}\hspace{-2pt}TM) \rightarrow \Gamma(Z, TZ)$ given by restricting a ${{b}^k}$-vector field to $Z$. Map \ref{ctgt_map} is dual to Map \ref{tgt_map}. We study the (co)kernel of these maps, starting with a technical remark.

\begin{remark}\label{tech_rmk} Let $v$ be a $b^k$-vector field that vanishes on $Z$ when viewed as a section of $TM$, and let $x_n \in j_Z$ be a local defining function for $Z$. Then $v$ also vanishes on $Z$ as a section of ${^{b^k}}TM$ precisely at those points where the $k$-jet $[\mathcal{L}_v(x_n)]^k$ vanishes.
\end{remark}
\begin{proof} In local coordinates $\{x_1, \dots, x_n\}$
\[
v = \phi_n x_n^k \frac{\partial}{\partial x_n} + \sum_{i < n} \phi_i \frac{\partial}{\partial x_i}
\]
where $\{\phi_{i}\}_{i \leq n}$ are smooth functions and $\{\phi_{i}\}_{i < n}$ vanish on $Z$. Because the functions $\{\phi_{i}\}_{i \leq n}$ constitute the trivialization of ${^{b^k}}TM$ induced by the local coordinates, $v$ vanishes on $Z$ as a section of ${^{b^k}}TM$ at those points of $Z$ where $\phi_n$ vanishes, which are precisely the points where $[\mathcal{L}_v(x_n)]^k = [\phi_nx_n^k]^k$ vanishes.
\end{proof}
\begin{proposition}\label{cnvs}
The kernel of Map \ref{tgt_map} has a canonical nowhere vanishing section.
\end{proposition}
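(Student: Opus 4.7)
The plan is to define the canonical section locally via a coordinate chart adapted to the jet data, and then verify invariance under change of chart so that the local definitions patch. In any coordinate patch $(U, x_1, \dots, x_n)$ with $x_n \in j_Z$, Example \ref{the_above_example} trivializes ${^{b^k}}TM|_U$ with the frame $\{\frac{\partial}{\partial x_1}, \dots, \frac{\partial}{\partial x_{n-1}}, x_n^k \frac{\partial}{\partial x_n}\}$. Under Map \ref{tgt_map}, the first $n-1$ frame elements project onto a basis of $TZ$ along $Z \cap U$, while $x_n^k \frac{\partial}{\partial x_n}$ projects to zero (since $x_n^k|_Z = 0$). Thus the kernel is the line bundle spanned by $s_U := (x_n^k \frac{\partial}{\partial x_n})|_{Z \cap U}$, and by Remark \ref{tech_rmk}, $s_U$ is nowhere vanishing as a section of ${^{b^k}}TM$ since $\mathcal{L}_{x_n^k \partial/\partial x_n}(x_n) = x_n^k$ has nonzero $k$-jet at each point of $Z$.

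The main obstacle is canonicity. I would show that on any overlap the local sections $s_U$ agree by factoring an admissible coordinate change into (i) a diffeomorphism of the transverse coordinates $x_1, \dots, x_{n-1}$ with $x_n$ held fixed, and (ii) a replacement of $x_n$ by another representative $y \in j_Z$, which forces $y = x_n(1 + x_n^{k-1} g)$ for some smooth $g$ (using $[y - x_n]^{k-1} = 0$). For (i), the change-of-frame formula expresses the new $\frac{\partial}{\partial x_n}$ as the old $\frac{\partial}{\partial x_n}$ plus a $C^\infty(U)$-linear combination of the $\frac{\partial}{\partial x_i}$ for $i < n$; after multiplying through by $x_n^k$, the correction terms land in the $b^k$-frame with coefficients lying in $\mathcal{I}_Z^k$, so they vanish at every point of $Z$ as sections of ${^{b^k}}TM$.

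For (ii), I would carry out the direct computation
\[
y^k \frac{\partial}{\partial y} = \frac{y^k}{\partial y/\partial x_n} \cdot \frac{\partial}{\partial x_n} = \frac{(1 + x_n^{k-1} g)^k}{1 + k x_n^{k-1} g + x_n^k\, \partial g/\partial x_n} \cdot x_n^k \frac{\partial}{\partial x_n},
\]
and observe that the scalar prefactor restricts to $1$ on $Z$: for $k = 1$ both numerator and denominator reduce to $1 + g$ on $Z$, while for $k \geq 2$ the relation $x_n^{k-1}|_Z = 0$ forces both to reduce to $1$. Consequently $(y^k \frac{\partial}{\partial y})|_Z = (x_n^k \frac{\partial}{\partial x_n})|_Z$ as sections of ${^{b^k}}T|_Z M$. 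Combined with (i), this shows that the $s_U$ agree on overlaps, so they patch into the desired canonical nowhere vanishing global section of the kernel bundle over $Z$.
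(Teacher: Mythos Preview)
Your argument is correct. The paper's proof is organized a bit differently and more invariantly: instead of fixing full coordinate charts, it chooses only a local defining function $y \in j_Z$ together with \emph{any} vector field $v$ satisfying $\restr{dy(v)}{Z} = 1$, and declares the section to be $\restr{y^k v}{Z}$. Canonicity is then established by taking a second pair $(y_2, v_2)$, computing directly that $[\mathcal{L}_{y^k v - y_2^k v_2}(y)]^k = 0$, and invoking Remark~\ref{tech_rmk} a second time to conclude that $y^k v - y_2^k v_2$ vanishes on $Z$ as a section of ${^{b^k}}TM$. This handles the change of defining function and the change of transverse vector simultaneously, whereas you factor the coordinate change into steps (i) and (ii) and compute the transition scalar explicitly. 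Your route is more hands-on and requires the auxiliary (easy) observation that the intermediate system $(x_1', \dots, x_{n-1}', x_n)$ is genuinely a chart near $Z$; the paper's route is shorter but leans more heavily on the jet criterion of Remark~\ref{tech_rmk}. Both reduce to the same Taylor bookkeeping for $y = x_n(1 + x_n^{k-1}g)$.

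One small point of precision: in step~(ii) the identity $\partial/\partial y = (\partial y/\partial x_n)^{-1}\,\partial/\partial x_n$ holds precisely because the first $n-1$ coordinates are unchanged in that step, so it is worth stating that $\partial y/\partial x_n$ and $\partial g/\partial x_n$ are taken in the intermediate chart $(x_1', \dots, x_{n-1}', x_n)$.
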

\begin{proof} Pick a local defining function $y \in j_{Z}$ and a vector field $v$ satisfying $\restr{dy(v)}{Z} = 1$. Then $[\mathcal{L}_{y^{k}v}(y)]^k = [y^{k}\mathcal{L}_v(y)]^k$ is nonvanishing. By Remark \ref{tech_rmk}, $y^{k}v$ is a ${b}^k$-vector that vanishes on $Z$ as a section of $TM$ but is nowhere vanishing as a section of ${^{b^k}}TM$.

To prove that $y^kv$ is canonical, suppose $y_2 \in j_Z$ and $v_2$ are different choices of defining function and vector field. Then $y_2 = y(1 + gy^{k-1})$ for some smooth $g$ and
\begin{align*}
\mathcal{L}_{v_2}(y_2) &= \mathcal{L}_{v_2}(y) + gy^{k-1}\mathcal{L}_{v_2}(y)+ y\mathcal{L}_{v_2}(gy^{k-1})\\
&= (1 + kgy^{k-1})\mathcal{L}_{v_2}(y) + y^k\mathcal{L}_{v_2}(g)
\end{align*}
and
\begin{align*}
[\mathcal{L}_{y^kv - y_2^kv_2}&(y)]^{k} = [y^k \mathcal{L}_v(y) - y^k(1 + gy^{k-1})^k\mathcal{L}_{v_2}(y)]^k\\
&= \left[y^k \mathcal{L}_v(y) - y^k(1 + gy^{k-1})^k\frac{\mathcal{L}_{v_2}(y_2) - y^{k}\mathcal{L}_{v_2}(g)}{1 + kgy^{k-1}}\right]^k\\
&= 0\\
\end{align*}
By Remark \ref{tech_rmk}, $y^kv - y_2^kv_2$ vanishes on $Z$ as a section of ${^{b^k}}TM$, so $y^kv$ and $y_2^kv_2$ represent the same section of ${^{b^k}}T\restr{M}{Z}$.
\end{proof}

\noindent Turning our attention to the cotangent bundle, observe that although the differential form $y^{-k}dy$ is not defined on $Z$ as a section of $T^*M$, its pairing with any $b^k$-vector field extends smoothly over $Z$. Therefore, $y^{-k}dy$ extends smoothly over $Z$ as a section of ${^{b^k}}T^*M$. By pairing $y^{-k}dy$ with a representative of a nonvanishing section of ker($^{{b}^k}T\restr{M}{Z} \rightarrow TZ$), we see that $y^{-k}dy$ is nonwhere vanishing. This proves the following claim.

\begin{claim} The cokernel of Map \ref{ctgt_map} has a nowhere vanishing section.
\end{claim}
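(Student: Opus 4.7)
The plan is to dualize Proposition \ref{cnvs}. Map \ref{ctgt_map} is dual to Map \ref{tgt_map} (a surjection of rank $n$ bundles onto a rank $n-1$ bundle over $Z$), so taking cokernels of injections $\leftrightarrow$ kernels of surjections in this short exact sequence identifies the cokernel of Map \ref{ctgt_map} canonically with the dual line bundle of $\ker({^{b^k}}T\restr{M}{Z} \twoheadrightarrow TZ)$. By Proposition \ref{cnvs}, that kernel line bundle carries a canonical nowhere-vanishing section, hence so does its dual; the desired nowhere-vanishing section of the cokernel is the one whose pairing with the Proposition \ref{cnvs} section is identically $1$.

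To make this concrete, I would exhibit the local representative $y^{-k}dy$ of the preceding paragraph and verify two things. First, that $y^{-k}dy$ extends smoothly over $Z$ as a section of ${^{b^k}}T^*M$: in coordinates $(x_1, \dots, x_n)$ with $x_n = y \in j_Z$, Example \ref{the_above_example} gives the local frame $\{\partial/\partial x_1, \dots, \partial/\partial x_{n-1}, y^k \partial/\partial y\}$ for ${^{b^k}}TM$, and the pairings of $y^{-k}dy$ against these frame elements are $0, \dots, 0, 1$, all smooth. Second, that its image in the cokernel is nowhere vanishing: taking the canonical kernel section $y^k v$ from the proof of Proposition \ref{cnvs} with $\restr{dy(v)}{Z} = 1$, one computes
\[
\langle y^{-k}dy, y^k v\rangle = dy(v) \equiv 1 \text{ on } Z.
\]
Since every element of the image of Map \ref{ctgt_map} pairs to zero with every element of $\ker({^{b^k}}T\restr{M}{Z} \twoheadrightarrow TZ)$, this nontrivial pairing forces $y^{-k}dy\rvert_Z$ to represent a nonzero class in the cokernel at every point of $Z$.

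The only real obstacle is verifying that the locally-defined forms $y^{-k}dy$ patch to a globally well-defined section of the cokernel over $Z$, independently of the choice of local defining function representing $j_Z$. The duality argument of the first paragraph already guarantees this patching for free, since the abstract dual section is characterized by a pairing condition that does not involve $y$. If one prefers a hands-on check, one writes $y_2 = y_1(1 + g y_1^{k-1})$ for two such choices, expands $y_2^{-k} dy_2$, and observes that the discrepancy with $y_1^{-k} dy_1$ restricted to $Z$ lies in the image of $T^*Z \hookrightarrow {^{b^k}}T^*\restr{M}{Z}$, hence vanishes in the cokernel; this is the cotangent analogue of the canonicality computation in the proof of Proposition \ref{cnvs}.
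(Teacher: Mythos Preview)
Your proposal is correct and follows essentially the same route as the paper: exhibit $y^{-k}dy$, check that it extends to a smooth section of ${^{b^k}}T^*M$ by pairing with $b^k$-vector fields, and verify nonvanishing in the cokernel by pairing with the canonical kernel section from Proposition~\ref{cnvs}. The paper's argument is just the content of your second paragraph, stated in two sentences; your abstract duality framing and the explicit patching discussion in your first and third paragraphs are welcome elaborations but not additional ideas.
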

The preceding discussion describes of the fibers of the $b^k$-(co)tangent bundle of a $b^k$-manifold $(M, Z, [y]^{k-1})$ as follows.
\begin{align*}
{^{b^k}}T_pM &\cong \left\{ \begin{array}{c l}
T_pM & \textrm{for} \ p \notin Z\\
T_pZ + \langle y^k \frac{\partial}{\partial y} \rangle  & \textrm{for} \ p \in Z \end{array}\right.\\
{^{b^k}}T_p^*M &\cong \left\{ \begin{array}{c l}
T_p^*M & \textrm{for} \ p \notin Z\\
T_p^*Z + \langle \frac{dy}{y^k} \rangle  & \textrm{for} \ p \in Z \end{array}\right.
\end{align*}
\subsection{Properties of $b^k$-Forms}
From the above description of the fibers of ${^{b^k}}T_pM$, we see that $\Omega^p(M \backslash Z) \cong {^{b^k}}\Omega^p(M \backslash Z)$. That is, every $b^k$-form restricts to an ordinary differential form on $M \backslash Z$. We can therefore interpret a $b^k$-form as a differential form on $M \backslash Z$ that satisfies certain asymptotic properties (prescribed by the jet data) around $Z$. We also see that for any defining function $y \in j_Z$, every $b^k$-form can be written in a neighborhood $U$ of $Z$ in the form 
\begin{equation}\label{basic_decomp}
\omega = \frac{dy}{y^k} \wedge \alpha + \beta
\end{equation}
for differential forms $\alpha \in \Omega^{p-1}(U)$ and $\beta \in \Omega^{p}(U)$. Although the forms $\alpha$ and $\beta$ appearing in Equation \ref{basic_decomp} are not uniquely defined by $\omega$, we will show that $i^*(\alpha)$ is independent of the choice of $y, \alpha$ and $\beta$, where $i: Z \rightarrow M$ is the inclusion.

\begin{proposition}\label{f1f2} On a $b^k$-manifold, if $f_1, f_2 \in j_Z$ are local defining functions for $Z$, then in a neighborhood $U$ of $Z$
\[
\frac{df_1}{f_1^k} = \frac{df_2}{f_2^k} + \beta
\]
where $\beta \in \Omega^1(U)$.
\end{proposition}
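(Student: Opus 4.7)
The proof will be a direct local calculation exploiting the jet condition. The plan is to observe that since $f_1, f_2 \in j_Z$, we have $[f_2-f_1]^{k-1}=0$, so $f_2 - f_1 \in I^k$; using $f_1$ as a local defining function we may write $f_2 = f_1 u$ with $u := 1 + f_1^{k-1}g$ for some smooth function $g$ defined near $Z$. For $k \geq 2$ this forces $u|_Z \equiv 1$, so $u$ is nonvanishing on some open neighborhood $U$ of $Z$ and $1/u$ is smooth there. (The case $k=1$ is classical and amounts to $df_2/f_2 - df_1/f_1 = du/u$, where $u=1+g$ is nonvanishing near $Z$ because $f_2 = f_1 u$ is itself a defining function.)

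With this factorization in hand, I would apply the product rule to obtain
\[
\frac{df_2}{f_2^k} \;=\; \frac{u\,df_1 + f_1\,du}{f_1^k u^k} \;=\; \frac{df_1}{f_1^k u^{k-1}} + \frac{du}{f_1^{k-1} u^k},
\]
expand $du = (k-1)f_1^{k-2}g\,df_1 + f_1^{k-1}\,dg$, and reduce the proposition to showing that
\[
\frac{1-u^{k-1}}{u^{k-1}}\cdot\frac{df_1}{f_1^k} \;+\; \frac{(k-1)g}{f_1 u^k}\,df_1 \;+\; \frac{dg}{u^k}
\]
is a smooth $1$-form on $U$. The final summand is manifestly smooth; the other two have poles in $f_1$ which must cancel.

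To arrange this cancellation I would use the factorization $1 - u^{k-1} = -f_1^{k-1}g\,P(u)$ with $P(u) := 1+u+\cdots+u^{k-2}$, which lowers the pole of the first summand to order one and combines the two singular contributions into
\[
\frac{g}{f_1 u^k}\bigl((k-1) - uP(u)\bigr)\,df_1.
\]
Writing $(k-1)-uP(u) = \sum_{j=1}^{k-1}(1-u^j)$ and noting that each $1-u^j$ is divisible by $1-u = -f_1^{k-1}g$, the bracketed quantity is divisible by $f_1^{k-1}$; the apparent $1/f_1$ pole cancels and what remains is smooth. The only real obstacle is this bookkeeping — tracking exponents of $f_1$ through the expansion and verifying the polynomial identity that produces the needed divisibility — and there is no conceptual difficulty once the factorization $f_2 = f_1 u$ is in place.
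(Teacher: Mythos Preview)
Your argument is correct and follows essentially the same approach as the paper: both write one defining function as the other times $u = 1 + (\text{def.\ fn.})^{k-1}g$ and then verify by direct algebra that the difference $df_1/f_1^k - df_2/f_2^k$ is smooth. The only difference is in the bookkeeping of step three: the paper introduces an auxiliary $g'$ with $(1+gf_2^{k-1})^{-1} = 1 + g'f_2^{k-1}$ and tracks binomial expansions, whereas your polynomial identities $1-u^{k-1} = -(u-1)P(u)$ and $(k-1)-uP(u) = \sum_{j=1}^{k-1}(1-u^j)$ organize the same cancellation more transparently.
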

\begin{proof} The proof is technical. See Section \ref{yucky_section} for the details.
\end{proof}

\begin{corollary}\label{indepedence_corollary} Given a decomposition of $\omega \in {^{b^k}}\hspace{-2pt}\Omega(M)$ as in Equation \ref{basic_decomp}, $i^*(\alpha)$ is independent of the decomposition.
\end{corollary}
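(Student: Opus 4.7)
The plan is to reduce to two decompositions of the same $b^k$-form and use Proposition \ref{f1f2} to replace the ``singular part'' of one with the ``singular part'' of the other, so that the difference lands in the smooth world, where the computation is a local coordinate exercise.

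Concretely, suppose $\omega$ admits two decompositions
\[
\omega \;=\; \frac{dy_1}{y_1^k}\wedge \alpha_1 + \beta_1 \;=\; \frac{dy_2}{y_2^k}\wedge \alpha_2 + \beta_2
\]
on a neighborhood $U$ of $Z$, with $y_1, y_2 \in j_Z$ and $\alpha_i,\beta_i$ smooth. First I would apply Proposition \ref{f1f2} to write $\frac{dy_1}{y_1^k} = \frac{dy_2}{y_2^k} + \gamma$ with $\gamma \in \Omega^1(U)$. Substituting and rearranging gives
\[
\frac{dy_2}{y_2^k}\wedge(\alpha_1-\alpha_2) \;=\; (\beta_2-\beta_1) - \gamma\wedge\alpha_1,
\]
whose right-hand side is an ordinary smooth form on $U$. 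The task reduces to showing: if $\eta := \alpha_1 - \alpha_2$ is smooth and $\frac{dy_2}{y_2^k}\wedge\eta$ is smooth, then $i^*\eta = 0$.

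Next I would work in local coordinates $(y_2, x_1,\dots,x_{n-1})$ adapted to $Z = \{y_2 = 0\}$, and split $\eta = dy_2 \wedge \eta' + \eta''$, where neither $\eta'$ nor $\eta''$ contains a $dy_2$ factor. Then $\frac{dy_2}{y_2^k}\wedge\eta = \frac{1}{y_2^k}\,dy_2\wedge\eta''$, and smoothness of this expression forces every coefficient of $\eta''$ (in the $dx_I$-basis) to be divisible by $y_2^k$. In particular $\eta''$ vanishes on $Z$, so $i^*\eta'' = 0$. Since also $i^*dy_2 = d(y_2\circ i) = 0$, we conclude $i^*\eta = i^*(dy_2 \wedge \eta') + i^*\eta'' = 0$, i.e.\ $i^*\alpha_1 = i^*\alpha_2$.

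The argument is almost entirely bookkeeping once Proposition \ref{f1f2} is invoked; the one subtle point — and what I'd consider the main obstacle — is the smoothness-forces-divisibility step, which relies on $dy_2$ being a nonvanishing covector transverse to $Z$ and on $y_2^{-k}$ genuinely blowing up. This is why the decomposition into the ``$dy_2$-part'' and ``$dy_2$-free part'' in the chosen coordinates is important: it isolates the singular piece and makes the divisibility of coefficients by $y_2^k$ unambiguous. Finally, independence at a global point of $Z$ follows because the local answer $i^*\alpha$ agrees on overlaps of such coordinate charts, being a geometrically defined restriction.
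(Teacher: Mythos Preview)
Your argument is correct and follows essentially the same route as the paper: apply Proposition \ref{f1f2} to reduce to a single defining function, conclude that $\frac{dy}{y^k}\wedge(\alpha_1-\alpha_2)$ is smooth, and deduce $i^*(\alpha_1-\alpha_2)=0$. The paper states this last implication without justification, whereas you have spelled out the local-coordinate divisibility argument; your added detail is sound and not a departure in strategy.
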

\begin{proof} Let $\alpha_1$ and $\alpha_2$ be the $\alpha$ terms of two such decompositions. Setting the decompositions equal and applying the preceding proposition shows that
\[
\frac{dy}{y^k}\wedge(\alpha_2 - \alpha_1)
\]
is a smooth form for some local defining function $y \in j_Z$, so $i^*(\alpha_2 - \alpha_1) = 0$.
\end{proof}
This proves the well-definedness of the map 
\begin{align}\label{alphamap}
\iota_{\mathbb{L}}: {^{b^k}}\hspace{-2pt}\Omega^p(M) &\rightarrow \Omega^{p-1}(Z)\\
\frac{dy}{y^k} \wedge \alpha + \beta &\mapsto i^*(\alpha) \notag
\end{align}
Alternatively, this map can be defined by restricting a form to $Z$, then contracting with the canonical section $\mathbb{L}$ described in Proposition \ref{cnvs}. This motivates the notation $\iota_{\mathbb{L}}$ for the map.

Equation \ref{basic_decomp} might give us hope that we can define a $b^k$-form without reference to any jet data as ``a form $\omega$ on $M \backslash Z$ which admits a decomposition $\omega = y^{-k}dy \wedge \alpha + \beta$ in a neighborhood of $Z$ for {\it some} local defining function $y$''. However, for a fixed $\omega$ the existence of a decomposition $\omega = y^{-k}dy \wedge \alpha + \beta$ depends strongly on $[y]^{k-1}$. It turns out that the set of $\omega \in \Omega(M \backslash Z)$ which extends over $Z$ with respect to {\em some} $[y]^{k-1}$ is not even closed under addition. This hopefully motivates (for a second time) the necessity of the jet data in the definition of a $b^k$-manifold.

\subsection{Viewing a $b^{\ell}$-Form as a $b^k$-Form}
To prepare for the next section, we consider a new family of maps between the $b^k$-(co)tangent bundles. These maps generalize the fact that any differential form is naturally a $b^k$-form.

For any $0 < \ell \leq k$, the natural map $J_Z^{k-1} \rightarrow J_Z^{\ell - 1}$ allows us to canonically endow a $b^k$-manifold $(M, Z, j_Z)$ with a $b^{\ell}$-manifold structure. Defining ${^{b^0}}TM := TM$ and ${^{b^0}}T^*M := T^*M$ for notational convenience, a $b^k$-manifold structure on $M$ defines $2k+2$ different bundles $^{b^{\ell}}TM$, $^{b^{\ell}}T^*M$ over $M$ for $0 \leq \ell \leq k$. A $b^k$-vector field will also be a $b^{\ell}$-vector field for the induced $b^{\ell}$-manifold structure. This induces a map
\begin{equation}\label{vf_downgrade}
^{{b}^{k}}TM \rightarrow ^{{b}^{\ell}}TM
\end{equation}
and its dual
\begin{equation}\label{df_upgrade}
^{{b}^{\ell}}T^*M \rightarrow ^{{b}^{k}}T^*M\textrm{,}\\
\end{equation}
the latter of which can be described explicitly in terms of the decompositions from Equation \ref{basic_decomp} as
\[
\frac{dy}{y^{\ell}} \wedge \alpha + \beta \mapsto \frac{dy}{y^k} \wedge (y^{k-\ell} \alpha) + \beta\textrm{.}
\]

\section{De Rham Theory and Laurent Series of $b^k$-forms}\label{sec_derham}

We define a differential $d: {^{b^k}}\hspace{-2pt}\Omega^p(M) \rightarrow {^{b^k}}\hspace{-2pt}\Omega^{p+1}(M)$ by
\[
d \left( \frac{dy}{y^k} \wedge \alpha + \beta \right) = \frac{dy}{y^k} \wedge d\alpha + d\beta \textrm{.}
\]
This definition does not depend on the decomposition. Indeed, $d(\omega)$ is the unique extension of the image of the classic de Rham differential $d(\restr{\omega}{M \backslash Z}) \in \Omega^p(M \backslash Z) \cong {^{b^k}}\hspace{-2pt}\Omega^p(M \backslash Z)$ over $Z$. 

\begin{definition} The {\bf $b^k$-de Rham complex} is $( {^{b^k}}\hspace{-2pt}\Omega^p(M), d)$, with ${^{b^k}}\hspace{-2pt}\Omega^0(M) := C^{\infty}(M)$. The {\bf $b^k$-cohomology} ${^{b^k}}\hspace{-2pt}H^*(M)$ is the cohomology of this complex.
\end{definition}

\begin{proposition}\label{ses_exact} The sequence below, with $g$ given by Map $\eqref{df_upgrade}$, is exact
\begin{equation}\label{ses1} 
0 \rightarrow ^{b^{k-1}}\hspace{-2pt}\Omega^p(M) \stackrel{g}{\rightarrow} {^{b^k}}\hspace{-2pt}\Omega^p(M) \stackrel{\iota_{\mathbb{L}}}{\rightarrow} \Omega^{p-1}(Z) \rightarrow 0\textrm{.}
\end{equation}
Moreover, for any closed $\alpha \in \Omega^{p-1}(Z)$ and collar neighborhood $(y, \pi): U \rightarrow \mathbb{R} \times Z$ of $Z$ with $y \in j_Z$, there is a closed form $\omega \in \iota_{\mathbb{L}}^{-1}(\alpha)$ such that 
\[
\omega = \frac{dy}{y^k} \wedge \pi^*(\alpha)
\]
in a neighborhood of $Z$.
\end{proposition}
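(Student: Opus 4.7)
The statement has two parts, the exactness of the short sequence and the explicit closed representative in the moreover clause, so I plan to address them separately.

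For exactness I will check the three conditions at the three positions. Injectivity of $g$ should be immediate: away from $Z$, $g$ is the identity on smooth $p$-forms, so an element of $\ker g$ vanishes on the dense open set $M \setminus Z$ and, as a section of a vector bundle, must vanish identically. The composition $\iota_\mathbb{L} \circ g = 0$ follows directly from the explicit formula $\frac{dy}{y^{k-1}} \wedge \alpha + \beta \mapsto \frac{dy}{y^k} \wedge (y\alpha) + \beta$ displayed after Equation \eqref{df_upgrade}: the extra factor of $y$ kills $i^*$. For $\ker \iota_\mathbb{L} \subseteq \textnormal{im}\, g$, given $\omega$ with $\iota_\mathbb{L}(\omega) = 0$, I will write $\omega = \frac{dy}{y^k} \wedge \alpha + \beta$ near $Z$; since $i^*\alpha = 0$, Hadamard's lemma yields $\alpha = y \alpha'$ for a smooth $\alpha'$ defined in a neighborhood of $Z$, and then $\omega = \frac{dy}{y^{k-1}}\wedge \alpha' + \beta$ exhibits $\omega$ as a $b^{k-1}$-form near $Z$. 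Combined with the tautological fact that $\omega$ is a $b^{k-1}$-form on the dense open set $M \setminus Z$, this assembles into a global $b^{k-1}$-form whose image under $g$ is $\omega$.

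For the moreover clause, which also supplies surjectivity of $\iota_\mathbb{L}$ in general, my plan is to use a cutoff along the collar. I will choose a smooth $\tilde{\chi}: \mathbb{R} \to [0,1]$ that is identically $1$ near $0$ and compactly supported inside the $y$-range of $U$, set $\chi := \tilde{\chi} \circ y$ on $U$, and extend $\chi$ by zero outside $U$. Then $\omega := \chi \cdot \frac{dy}{y^k} \wedge \pi^*(\alpha)$ is a global $b^k$-form equal to $\frac{dy}{y^k}\wedge\pi^*\alpha$ on the neighborhood $\{\chi = 1\}$ of $Z$, so $\iota_\mathbb{L}(\omega) = i^*\pi^*\alpha = \alpha$. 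A short computation shows $d\omega = d\chi \wedge \frac{dy}{y^k}\wedge\pi^*\alpha - \chi \cdot \frac{dy}{y^k}\wedge\pi^*(d\alpha)$; the second term vanishes because $\alpha$ is closed, and the first vanishes because $d\chi$ is a scalar multiple of $dy$. I do not anticipate deep obstacles; the most delicate step is the Hadamard-lemma rewrite in the exactness argument, since the factorization $\alpha = y\alpha'$ only holds in a neighborhood of $Z$ and one must verify that the resulting local presentation of $\omega$ as a $b^{k-1}$-form glues with its smooth presentation on $M \setminus Z$ into a single global section of $\wedge^p({^{b^{k-1}}}T^*M)$.
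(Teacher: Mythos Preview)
Your approach matches the paper's almost exactly: the paper also dismisses injectivity of $g$ and $\iota_{\mathbb L}\circ g=0$ as routine, proves $\ker\iota_{\mathbb L}\subseteq\mathrm{im}\,g$ by rewriting $\omega=\frac{dy}{y^k}\wedge\alpha+\beta$ with $\alpha$ divisible by $y$, and builds the closed preimage in the moreover clause by extending $y$ to a function that is locally constant outside $U$ (your cutoff $\chi\cdot\frac{dy}{y^k}$ is an equivalent device, and your closedness computation is correct).

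One small imprecision to flag: from $i^*\alpha=0$ you cannot conclude $\alpha=y\alpha'$ directly via Hadamard, since $i^*$ only kills the $T^*Z$-components; for instance $\alpha=dy$ has $i^*\alpha=0$ but does not vanish on $Z$. The paper handles this by first using the splitting $T^*M|_U\cong T^*Z\oplus\langle dy\rangle$ to strip the $dy$-component from $\alpha$ (which does not change $\frac{dy}{y^k}\wedge\alpha$), after which $i^*\alpha=0$ genuinely forces $\alpha|_Z=0$ and your Hadamard argument goes through. With that one-line fix your proof is complete; the gluing concern you raise is not a real obstacle, since a $b^{k-1}$-form is determined by its restriction to $M\setminus Z$ and extendability over $Z$ is a purely local condition verified by your expression $\frac{dy}{y^{k-1}}\wedge\alpha'+\beta$.
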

\begin{proof} The only nontrivial part of the exactness claim is that $\textrm{ker}(\iota_{\mathbb{L}}) \subseteq \textrm{im}(g)$. The kernel of $\iota_{\mathbb{L}}$ consists precisely of those $\omega$ that admit some decomposition 
\[
\omega = \frac{dy}{y^k} \wedge \alpha + \beta
\]
in a neighborhood of $Z$ for which $i^*(\alpha) = 0$. Locally around $Z$, $T^*M$ splits as $T^*Z + \langle dy \rangle$, so we may replace $\alpha$ by a form that vanishes on $Z$ without changing $\omega$. Then $y^{-1}\alpha$ is a smooth form, and 
\[
\frac{dy}{y^{k-1}} \wedge \frac{\alpha}{y} + \beta
\]
extends over $M$ to a $b^{k-1}$ form in $g^{-1}(\omega)$. Therefore, Sequence \ref{ses1} is exact.

Given a closed $\alpha \in \Omega^{p-1}(Z)$ and a collar neighborhood $(y, \pi): U \rightarrow (-R, R) \times Z$ of $Z$ with $y \in j_Z$, let $\widetilde{y} \in C^{\infty}(M)$ be a function that agrees with $y$ on $(-R/2, R/2) \times Z$ and is locally constant outside $U$. Then the $b^k$-form $\omega = \widetilde{y}^{-k}d\widetilde{y} \wedge \pi^*(\alpha)$ extends to a closed $b^k$-form on $M$ that vanishes outside $U$ and satisfies $\iota_{\mathbb{L}}(\omega) = \alpha$. In $(-R/2, R/2) \times Z$,
\[
\omega = \frac{dy}{y^k}\wedge \pi^*(\alpha)
\]
and $d\pi^*(\alpha) = \pi^*(d\alpha) = 0$.
\end{proof}
One can check that the short exact sequence from Proposition \ref{ses_exact} is a chain map of complexes, hence induces a long exact sequence
\[
\dots \rightarrow {^{b^{k-1}}}\hspace{-2pt}H^*(M) \rightarrow {^{b^k}}\hspace{-2pt}H^*(M) \rightarrow H^{*-1}(Z) \rightarrow {^{b^{k-1}}}\hspace{-2pt}H^{*+1}(M) \rightarrow \dots
\]
By Proposition \ref{ses_exact}, the maps ${^{b^k}}\hspace{-2pt}H^*(M) \rightarrow H^{*-1}(Z)$ are surjective, so the long exact sequence is a collection of short exact sequences
\begin{align}\label{chly_ses}
0 \rightarrow  {^{b^{k-1}}}\hspace{-2pt}H^p(M) \rightarrow  {^{b^k}}\hspace{-2pt}H^p(M) \rightarrow H^{p-1}(Z)\rightarrow 0
\end{align}
Using induction on $k$, this proves the following proposition.
\begin{proposition}\label{noncanonicaliso}
\[
{^{b^k}}\hspace{-2pt}H^p(M) \cong H^p(M) \oplus \left( H^{p-1}(Z) \right)^k
\]
\end{proposition}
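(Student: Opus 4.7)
The plan is a direct induction on $k$ using the short exact sequence \eqref{chly_ses} of real vector spaces.

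For the base case I take $k = 0$. The conventions ${^{b^0}}TM := TM$ and ${^{b^0}}T^*M := T^*M$ introduced just before Map \eqref{vf_downgrade} give ${^{b^0}}\hspace{-2pt}H^p(M) = H^p(M)$, which matches the claimed formula with the $H^{p-1}(Z)$ summand appearing zero times.

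For the inductive step, assume the proposition for $k-1$. The short exact sequence
\[
0 \to {^{b^{k-1}}}\hspace{-2pt}H^p(M) \to {^{b^k}}\hspace{-2pt}H^p(M) \to H^{p-1}(Z) \to 0
\]
from \eqref{chly_ses} is an exact sequence of real vector spaces, hence splits. Therefore ${^{b^k}}\hspace{-2pt}H^p(M) \cong {^{b^{k-1}}}\hspace{-2pt}H^p(M) \oplus H^{p-1}(Z)$, and combining with the inductive hypothesis produces
\[
{^{b^k}}\hspace{-2pt}H^p(M) \cong H^p(M) \oplus \left(H^{p-1}(Z)\right)^{k-1} \oplus H^{p-1}(Z) \cong H^p(M) \oplus \left(H^{p-1}(Z)\right)^k.
\]

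There is no genuine obstacle in this argument, since all of the real work is already contained in the construction of \eqref{chly_ses} in Proposition \ref{ses_exact}. The only subtle point worth flagging is that the splitting above is \emph{non-canonical}: abstractly it requires merely choosing any section of the surjection onto $H^{p-1}(Z)$, and concretely an explicit (though still non-canonical) section is already visible in the second half of Proposition \ref{ses_exact} via the assignment $\alpha \mapsto [\widetilde{y}^{-k} d\widetilde{y} \wedge \pi^*(\alpha)]$ after fixing a collar $(y, \pi): U \to \mathbb{R} \times Z$ with $y \in j_Z$. This non-canonicity is precisely what later sections remedy using Laurent series and the Liouville-Laurent decomposition advertised in the introduction, so it seems worth emphasizing rather than hiding.
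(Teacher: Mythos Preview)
Your proof is correct and follows the same approach as the paper: induction on $k$ using the short exact sequence \eqref{chly_ses}, which splits since we are working with real vector spaces. Your additional commentary on the non-canonicity of the splitting and the explicit section furnished by Proposition \ref{ses_exact} is accurate and anticipates exactly the issue the paper addresses next.
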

\begin{proof} From the remarks above.
\end{proof}

So far, this isomorphism is non-canonical: although we can lift every $[\alpha] \in H^{p-1}(Z)$ in Equation \ref{chly_ses} to an element of ${^{b^k}}H^{p}(M)$, we do not yet have a preferred choice of lifting, and different choices yield genuinely different isomorphisms. Results in Subsection \ref{subsection_laurent}, where we show that the $(H^{p-1}(Z))^k$ summand of the image of any $[\omega] \in {^{b^k}}\hspace{-2pt}H^p(M)$ {\it can} be canonically defined, will give us partial relief from this uncomfortable state of affairs. Finally, in Section \ref{section_volume} we will give an explicit canonical map for the isomorphism in Proposition \ref{noncanonicaliso}, and in doing so we will see a geometric interpretation for the terms on the right side of the isomorphism.

\subsection{The Laurent Series of a Closed $b^k$-Form}\label{subsection_laurent}

\begin{definition} A {\bf Laurent Series} of a closed $b^k$-form $\omega$ is an expression for $\omega$ in a neighborhood of $Z$ of the form
\[
\omega = \sum_{i = 1}^k \frac{dy}{y^i} \wedge \alpha_{-i} + \beta
\]
where $y \in j_Z$ is a positively oriented local defining function and each $\alpha_{-i}$ is closed.
\end{definition}

\begin{remark}\label{proj_gives_ld} Every closed $b^k$-form has a Laurent series. In fact, Proposition \ref{ses_exact} shows that given a collar neighborhood $(y, \pi): U \rightarrow (-R, R) \times Z$ of $Z$ with $y \in j_Z$, every closed $b^k$-form $\omega$ can be written (in a neighborhood of $Z$) as the sum of a closed $b^{k-1}$ form and
\[
\frac{dy}{y^k}\wedge \pi^*(\iota_{\mathbb{L}}\omega)\textrm{.}
\]
By applying induction on the $b^{k-1}$ form, we arrive at a Laurent series of the form 
\[
\omega = \sum_{i = 1}^k \frac{dy}{y^i} \wedge \pi^*(\gamma_{-i}) + \beta
\]
for closed forms $\gamma_{-i}$ on $Z$.
\end{remark}
\begin{example}
Consider the $b^k$-manifold $(S^1 \times S^1, Z_1 \cup Z_2, [y]^{k-1})$ pictured in Figure \ref{fig_disconnect}. 

\begin{figure}[!ht]
\centering
\begin{tikzpicture}[scale = 0.6]

\draw[fill = gray, dashed] (0, 0) -- (0, 3) arc (180:360:1 and 0.5) -- (2, 0) arc (0:-180:1 and 0.5);
\draw[fill = lightgray, dashed] (2, 3) arc (0:360:1 and 0.5);

\draw[fill = gray, dashed, shift = {(4, 0)}] (0, 0) -- (0, 3) arc (180:360:1 and 0.5) -- (2, 0) arc (0:-180:1 and 0.5);
\draw[fill = lightgray, dashed, shift = {(4, 0)}] (2, 3) arc (0:360:1 and 0.5);

\draw[thick] (0, 0) -- (0, 3);
\draw[thick] (2, 0) -- (2, 3) node[below right]{$U_1$};
\draw[thick, shift = {(4,0)}] (0, 0) -- (0, 3);
\draw[thick, shift = {(4,0)}] (2, 0) -- (2, 3) node[below right]{$U_2$};

\draw[ultra thick] (0, 1.5) node[left]{$Z_1$} arc (180:360:1 and 0.5);
\draw[ultra thick, dashed] (0, 1.5) arc (180:0:1 and 0.5);

\draw[ultra thick, shift = {(4, 0)}] (0, 1.5) node[left]{$Z_2$} arc (180:360:1 and 0.5);
\draw[ultra thick, dashed, shift = {(4, 0)}] (0, 1.5) arc (180:0:1 and 0.5);

\draw[thick] (2, 3) arc (180:90:1.15 and 1.3);
\draw[thick] (4, 3) arc (0:97:1.15 and 1.35);

\draw[thick] (0, 3) arc (180:0:3 and 3);

\draw[thick] (2, 0) arc (180:270:1.15 and 1.3);
\draw[thick] (4, 0) arc (0:-97:1.15 and 1.35);

\draw[thick] (0, 0) arc (180:360:3 and 3);

\draw[->, shift = {(4, 0)}] (2.5, 1.5) -- node[above] {$y$} (5, 1.5);
\draw[<->] (9.5, -2) -- (9.5, 5) node[right] {$\mathbb{R}$};
\draw[thick] (9.4, 1.5) -- (9.6, 1.5) node[right]{$0$};

\end{tikzpicture}
\caption{A $b^k$-manifold with disconnected $Z$}
\label{fig_disconnect}
\end{figure}
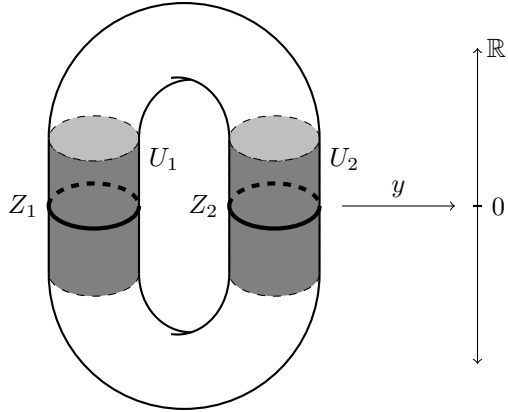

\noindent where a collar neighborhood $U = U_1 \cup U_2$ of $Z$ is shaded. Let $\{(\theta_i, y)\}$ be coordinates on $U_i$. Then $d\theta_1$ (respectively $d\theta_2$) extends trivially over $U_2$ (respectively $U_1$) to a form on all of $U$. Let $\omega$ be a $b^k$ 2-form on $M$. On $U$, it admits a decomposition
\[
\omega = \frac{dy}{y^k} \wedge (f d\theta_1 + gd\theta_2) + \beta
\]
for smooth functions $f, g$ and a smooth form $\beta$. Let $\pi: U \rightarrow Z$ be the vertical projection, and for $-k \leq i \leq -1$, let
\[
f_i := \frac{1}{(k+i)!} \restr{\frac{\partial^{k+i}f}{\partial y^{k+i}}}{Z}  \ \ \ \ \ g_i := \frac{1}{(k+i)!} \restr{\frac{\partial^{k+i}g}{\partial y^{k+i}}}{Z}.
\]
Then 
\begin{align*}
f &= \pi^*(f_{-k}) + \pi^*(f_{-k+1})y + \dots + \pi^*(f_{-1})y^{k-1} + \widetilde{f}\\
g &= \pi^*(g_{-k}) + \pi^*(g_{-k+1})y + \dots + \pi^*(g_{-1})y^{k-1} + \widetilde{g}
\end{align*}
for $\widetilde{f}, \widetilde{g} \in I^k$. Then $\omega$ has a Laurent series
\[
\omega = \sum_{i = 1}^k \frac{dy}{y^i} \wedge \left( \pi^*(f_i) d\theta_1 + \pi^*(g_i)d\theta_2 \right) + \beta'
\]
where $\beta'$ is smooth form.

\end{example}
\begin{proposition}\label{laurent_independence} The cohomology classes $[i^*(\alpha_{-i})] \in H^{p-1}(Z)$ appearing in a Laurent series of $\omega \in {^{b^k}}\Omega^p(M)$ depend only on $[\omega]$.
\end{proposition}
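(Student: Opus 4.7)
The plan is to split the proposition into two sub-statements whose combination delivers the result. First, (A): for any closed $\omega \in {^{b^k}}\Omega^p(M)$, any two Laurent series of $\omega$ produce the same classes $[i^*\alpha_{-i}] \in H^{p-1}(Z)$. Second, (B): if $\omega = d\eta$ for some $b^k$-form $\eta$, then $\omega$ admits a Laurent series in which each $i^*\alpha_{-i}$ is exact on $Z$. Granting (A) and (B), the proposition follows by linearity: given closed $\omega_1,\omega_2$ cohomologous via $\omega_1 - \omega_2 = d\eta$ with Laurent series $L_j$, the termwise difference $L_1 - L_2$ is a Laurent series of $d\eta$; by (A) applied to $d\eta$, its coefficient classes coincide with those of the Laurent series produced by (B); by (B), those classes vanish, so $[i^*\alpha_{-i}^{(1)}] = [i^*\alpha_{-i}^{(2)}]$.

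For (B), I would work in a collar neighborhood $(y,\pi)\colon U \to (-R,R) \times Z$ with $y \in j_Z$. Starting from the basic decomposition $\eta = \frac{dy}{y^k}\wedge \sigma + \tau$, Taylor-expand $\sigma$ in the $y$ direction using the collar projection: $\sigma = \pi^*\sigma_0 + y\pi^*\sigma_1 + \cdots + y^{k-1}\pi^*\sigma_{k-1} + y^k \widetilde\sigma$, where $\sigma_j \in \Omega^{p-1}(Z)$ and $\widetilde\sigma$ is smooth on $U$. Substituting and regrouping gives $\eta = \sum_{i=1}^k \frac{dy}{y^i}\wedge \pi^*\sigma_{k-i} + \tau'$ for a smooth $b^k$-form $\tau'$. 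Applying the differential from Section \ref{sec_derham} termwise produces $d\eta = \sum_{i=1}^k \frac{dy}{y^i}\wedge \pi^*(d\sigma_{k-i}) + d\tau'$; each $\pi^*(d\sigma_{k-i})$ is closed, so this is a valid Laurent series of $d\eta$, and $i^*\pi^*(d\sigma_{k-i}) = d\sigma_{k-i}$ is manifestly exact on $Z$.

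For (A), I would first reduce to the case of a single defining function. If $y, y' \in j_Z$ both represent the same $(k-1)$-jet, then $y' - y \in I^k$, so locally $y' = y + y^k h$, and a direct expansion shows that $\frac{dy'}{y'^i} - \frac{dy}{y^i}$ is smooth for each $i = 1, \ldots, k$ (the crucial cancellation at $i=k$ comes from the factor $k - i$ in the leading correction). Thus a Laurent series in $y'$ re-expresses as a Laurent series in $y$ with the same closed coefficient forms. With both series written using the same $y$, the difference yields a relation $\sum_{i=1}^k \frac{dy}{y^i}\wedge \gamma_{-i} + \delta = 0$ near $Z$, with each $\gamma_{-i}$ closed and $\delta$ smooth. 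In collar coordinates $(y,z)$ decompose $\gamma_{-\ell} = dy \wedge A_{-\ell} + B_{-\ell}$ into $dy$-containing and $dy$-free parts; the relation reduces to $\sum_i y^{k-i} B_{-i} = -y^k D$ for some smooth $D$. Taylor-expand $B_{-\ell} = \sum_{j\geq 0} y^j B_{-\ell}^{(j)}$ with $B_{-\ell}^{(j)} \in \Omega^{p-1}(Z)$ and match coefficients of $y^m$ for $0 \leq m \leq k-1$ to obtain $i^*\gamma_{-i} = B_{-i}^{(0)} = -\sum_{\ell > i} B_{-\ell}^{(\ell - i)}$. Closedness of each $\gamma_{-\ell}$ translates into $\partial_y B_{-\ell} = d_z A_{-\ell}$ (plus $d_z B_{-\ell} = 0$); differentiating in $y$ and restricting to $Z$ yields $B_{-\ell}^{(j)} = \frac{1}{j!}\,d_z\bigl((\partial_y^{j-1} A_{-\ell})|_Z\bigr)$, which is $d_z$-exact for $j \geq 1$. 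Hence $i^*\gamma_{-i}$ is a sum of $d_z$-exact forms on $Z$, proving (A). The main technical obstacle is the order-by-order bookkeeping here, in particular correctly translating the closedness of the $\gamma_{-\ell}$ into the inductive recurrence that forces each higher Taylor coefficient $B_{-\ell}^{(j)}$, $j\geq 1$, to be exact on $Z$.
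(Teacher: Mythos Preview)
Your proposal is correct, but it takes a genuinely different route from the paper.

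The paper argues by induction on $k$ and peels off one singular order at a time using the short exact sequence
\[
0 \rightarrow {^{b^{k-1}}}\hspace{-2pt}H^p(M) \rightarrow {^{b^k}}\hspace{-2pt}H^p(M) \xrightarrow{\iota_{\mathbb L}} H^{p-1}(Z) \rightarrow 0.
\]
The top class $[i^*\alpha_{-k}]$ is automatically well-defined since it is the image of $[\omega]$ under $\iota_{\mathbb L}$; the inductive step then shows that after subtracting the $y^{-k}$ pieces one obtains two cohomologous $b^{k-1}$-forms (using an explicit primitive built from a relative Poincar\'{e}-lemma primitive of $\alpha_{-k}-\alpha'_{-k}$, together with the injectivity of ${^{b^{k-1}}}H^p \rightarrow {^{b^k}}H^p$), and the induction hypothesis finishes.

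Your argument instead works entirely in a collar and avoids the exact sequences altogether. Your (B) is a clean observation: Taylor-expanding the coefficient of $\frac{dy}{y^k}$ in a primitive immediately produces a Laurent series of $d\eta$ with exact pulled-back coefficients. Your (A) is the interesting part: you reduce to a single $y$ (this is essentially Proposition~\ref{f1f2}, extended to all $i\le k$; your remark that the ``$k-i$'' factor kills the leading correction at $i=k$ is exactly the cancellation that makes Proposition~\ref{f1f2} work), and then do an order-by-order analysis of the relation $\sum_i y^{k-i}B_{-i} + y^k C = 0$. The crucial identity $\partial_y B_{-\ell} = d_z A_{-\ell}$ coming from $d\gamma_{-\ell}=0$ is precisely what forces each higher Taylor coefficient $B_{-\ell}^{(j)}$, $j\ge 1$, to be $d_Z$-exact. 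This is a nice, self-contained computation.

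In short: the paper's proof is more structural and reuses the cohomological machinery already in place (the short exact sequence, injectivity of ${^{b^{k-1}}}H\to{^{b^k}}H$, relative Poincar\'{e}); yours is more elementary and direct, needing only Taylor expansion in a collar and the local identity $\partial_y B = d_z A$. Both are valid; yours has the advantage of not invoking the exactness results of Proposition~\ref{ses_exact}, while the paper's has the advantage of making the role of the filtration by $b^{\ell}$-forms transparent.
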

\begin{proof} By Proposition \ref{f1f2}, we may assume that all our Laurent series are written with respect to the same local defining function $y \in j_Z$. When $k = 1$, then for $\omega \in {^{b^1}}\hspace{-2pt}\Omega^{p}(M)$, the class $[i^*(\alpha_{-1})]$ is the image of $[\omega]$ in the map appearing in Equation \ref{chly_ses}, and therefore depends only on $[\omega]$.

For $k > 1$, assume the proposition is true for $k - 1$, and let $\omega \in {^{b^k}}\hspace{-2pt}\Omega^{p}(M)$. Consider Laurent series of two representatives of $[\omega]$,
\[
\omega_0 = \sum_{i = 1}^k \frac{dy}{y^i} \wedge \alpha_{-i} + \beta \ \ \ \ \textrm{and} \ \ \ \ \omega_1 = \sum_{i = 1}^k \frac{dy}{y^i} \wedge \alpha^{\prime}_{-i} + \beta^{\prime}
\]
Both $[i^*(\alpha_{-k})]$ and $[i^*(\alpha^{\prime}_{-k})]$ are the image of $[\omega]$ in Equation \ref{chly_ses}, so are equal. If we can show that
\[
\sum_{i = 1}^{k-1} \frac{dy}{y^i} \wedge \alpha_{-i} + \beta \ \ \ \ \textrm{and} \ \ \ \ \sum_{i = 1}^{k-1} \frac{dy}{y^i} \wedge \alpha^{\prime}_{-i} + \beta^{\prime}
\]
are cohomologous $b^{k-1}$-forms, then we will be done by induction. That is, we must show that
\begin{equation}\label{wanna_be_exact}
\omega_1 - \frac{dy}{y^k} \wedge \alpha^{\prime}_{-k} - \left( \omega_0 - \frac{dy}{y^k} \wedge \alpha_{-k}\right)
\end{equation}
is an exact $b^{k-1}$-form. Because $[\omega_0] = [\omega_1]$, there is a $b^{k}$-form $\eta$ with $d\eta = \omega_1 - \omega_0$. Moreover, because $\alpha_{-k} - \alpha^{\prime}_{-k}$ is a closed form with $i^*(\alpha_{-k} - \alpha^{\prime}_{-k})$ exact, the relative Poincar\'{e} lemma implies that it has a primitive $\mu$. Then
\[
\eta + \frac{dy}{y^k} \wedge \mu
\]
is a primitive for the form (\ref{wanna_be_exact}). However, this primitive is a $b^k$-form; to prove that (\ref{wanna_be_exact}) is exact as a $b^{k-1}$-form (and in doing so complete the induction), simply observe that the map 
\[
{^{b^{k-1}}}\hspace{-2pt}H^{p}(M) \rightarrow {^{b^{k}}}\hspace{-2pt}H^{p}(M)
\]
from Sequence \ref{chly_ses} is injective, so any $b^{k-1}$-form exact as a $b^k$-form is also exact as a $b^{k-1}$-form.
\end{proof}

\begin{corollary}
Let 
\[
\omega = \sum_{i = 1}^{k} \frac{dy}{y^i} \wedge {\alpha}_{-i} + \beta
\]
be a Laurent series of the closed $b^{k}$-form $\omega$. The map
\begin{align}\label{laurent_map}
{^{b^k}}\hspace{-2pt}H^p(M) &\rightarrow (H^{p-1}(Z))^k\\
[\omega] &\mapsto ([i^*(\alpha_{-1})], [i^*(\alpha_{-2})], \dots, [i^*(\alpha_{-k})])\notag
\end{align}
is independent of the choice of Laurent series.
\end{corollary}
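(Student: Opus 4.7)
The plan is to derive this corollary as a direct consequence of Proposition \ref{laurent_independence}. For the map in Equation \ref{laurent_map} to be well-defined on ${^{b^k}}H^p(M)$, one must check two things: (i) for a fixed closed representative $\omega$ of $[\omega]$, any two Laurent series of $\omega$ yield the same tuple $([i^*(\alpha_{-i})])_{i=1}^k$ in $(H^{p-1}(Z))^k$; and (ii) any two cohomologous closed representatives $\omega_0, \omega_1$ produce the same tuple. Both of these claims are subsumed in the conclusion of Proposition \ref{laurent_independence}, which asserts that the classes $[i^*(\alpha_{-i})]$ are determined by the cohomology class $[\omega]$ alone.

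One small bookkeeping point: Proposition \ref{laurent_independence} is stated and proved under the simplifying assumption, justified at the outset of its proof by invoking Proposition \ref{f1f2}, that both Laurent series under comparison use the same local defining function $y \in j_Z$. To handle the general case where Laurent series of the same form are written with respect to two different defining functions $y_1, y_2 \in j_Z$, I would note that $y_1 = y_2(1 + g\, y_2^{k-1})$ for a smooth $g$, and a direct expansion (essentially the same calculation used to prove Proposition \ref{f1f2}) shows that $\tfrac{dy_1}{y_1^{\ell}} - \tfrac{dy_2}{y_2^{\ell}}$ is a smooth form for each $1 \leq \ell \leq k$. Substituting, a Laurent series with respect to $y_1$ can be rewritten as one with respect to $y_2$ using the same closed forms $\alpha_{-i}$, with the discrepancy absorbed into the smooth summand $\beta$. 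Thus the classes $[i^*(\alpha_{-i})]$ are unchanged under this rewriting, and the reduction to a common defining function is legitimate.

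Finally, the output of the map genuinely lies in $(H^{p-1}(Z))^k$ because, by the definition of a Laurent series, each $\alpha_{-i}$ is closed, whence $i^*(\alpha_{-i})$ is a closed form on $Z$. There is no real obstacle here: all of the substance has been carried out in Propositions \ref{f1f2} and \ref{laurent_independence}, and this corollary is essentially a packaging statement. The hardest step, already dispatched inside Proposition \ref{laurent_independence}, was the inductive argument that used the relative Poincar\'{e} lemma to upgrade a $b^k$-primitive of the tail to a $b^{k-1}$-primitive, via the injectivity of ${^{b^{k-1}}}H^p(M) \rightarrow {^{b^k}}H^p(M)$ from Sequence \ref{chly_ses}.
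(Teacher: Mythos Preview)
Your proposal is correct and matches the paper's own treatment: the paper gives no separate proof for this corollary, as it is an immediate repackaging of Proposition~\ref{laurent_independence}. Your extra bookkeeping paragraph about passing between defining functions $y_1, y_2 \in j_Z$ is fine (and indeed follows by applying Proposition~\ref{f1f2} to the induced $b^\ell$-structure for each $\ell \leq k$), though note that this reduction is already absorbed into the first sentence of the proof of Proposition~\ref{laurent_independence} itself, so it need not be repeated here.
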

\begin{definition}
Given a $b^k$-form $\omega$, the image of $[\omega]$ under Map \ref{laurent_map} is the {\bf Laurent Decomposition} of $[\omega]$.
\end{definition}
The result below strengthens Theorem \ref{noncanonicaliso}.
\begin{theorem} The sequence below, with $g, f$ given by the Map \ref{df_upgrade} and Map \ref{laurent_map} respectively, is exact.
\begin{equation}\label{laurent_ses}
0 \rightarrow H^p(M) \stackrel{g}{\rightarrow} {^{b^k}}\hspace{-2pt}H^p(M) \stackrel{f}{\rightarrow} (H^{p-1}(Z))^k \rightarrow 0
\end{equation}
\end{theorem}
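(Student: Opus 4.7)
The plan is to verify the four standard conditions for exactness of a short sequence: injectivity of $g$, vanishing of the composition $f \circ g$, surjectivity of $f$, and $\ker f \subseteq \operatorname{im} g$. The main ingredients are Sequence \ref{chly_ses}, Proposition \ref{ses_exact}, Proposition \ref{laurent_independence}, and induction on $k$.

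I would first dispatch the easy conditions. Injectivity of $g$ follows by composing the injective maps ${^{b^{\ell-1}}}\hspace{-2pt}H^p(M) \hookrightarrow {^{b^{\ell}}}\hspace{-2pt}H^p(M)$ from Sequence \ref{chly_ses} for $\ell = 1, \dots, k$, since $H^p(M) = {^{b^0}}\hspace{-2pt}H^p(M)$ by convention. The relation $f \circ g = 0$ is immediate: a smooth form $\omega \in \Omega^p(M)$, viewed as a $b^k$-form, admits the trivial Laurent series $\omega = 0 + \omega$ in which every residue $\alpha_{-i}$ is zero.

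For surjectivity of $f$, given $(c_1, \dots, c_k) \in (H^{p-1}(Z))^k$, I would choose closed representatives $\gamma_i \in \Omega^{p-1}(Z)$ of each $c_i$, and for each $i$ apply Proposition \ref{ses_exact} to the induced $b^i$-manifold structure on $M$ to produce a closed $b^i$-form $\omega_i$ which agrees with $\frac{dy}{y^i}\wedge \pi^*(\gamma_i)$ in a neighborhood of $Z$. Viewing each $\omega_i$ as a $b^k$-form via Map \ref{df_upgrade} (which is a literal identification of differential forms) and summing, the resulting closed $b^k$-form has a Laurent series whose $-i$ term is $\pi^*(\gamma_i)$, so its Laurent decomposition is exactly $(c_1, \dots, c_k)$.

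The main obstacle is $\ker f \subseteq \operatorname{im} g$, which I would handle by induction on $k$. The base case $k = 1$ is precisely Sequence \ref{chly_ses} for $k=1$, since in that case the Laurent decomposition is the connecting map ${^{b^1}}\hspace{-2pt}H^p(M) \to H^{p-1}(Z)$. For the inductive step, suppose $[\omega] \in {^{b^k}}\hspace{-2pt}H^p(M)$ has vanishing Laurent decomposition. Since the last coefficient $[i^*(\alpha_{-k})]$ vanishes, Sequence \ref{chly_ses} produces a closed $b^{k-1}$-form $\omega'$ with $g(\omega') \sim \omega$ in ${^{b^k}}\hspace{-2pt}H^p(M)$. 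The key observation is that any Laurent series for $\omega'$ as a $b^{k-1}$-form is simultaneously a Laurent series for $g(\omega')$ as a $b^k$-form with trivial $-k$ term, so Proposition \ref{laurent_independence} forces the Laurent decomposition of $[\omega']$ in $(H^{p-1}(Z))^{k-1}$ to vanish as well. The inductive hypothesis places $[\omega']$ in the image of $H^p(M) \to {^{b^{k-1}}}\hspace{-2pt}H^p(M)$, and post-composing with $g$ places $[\omega]$ in the image of $H^p(M) \to {^{b^k}}\hspace{-2pt}H^p(M)$, completing the induction.
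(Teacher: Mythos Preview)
Your proof is correct and follows essentially the same approach as the paper. The paper handles injectivity of $g$ and surjectivity of $f$ identically to you, and dismisses exactness at the middle as ``straightforward''; your inductive argument using Sequence \ref{chly_ses} and Proposition \ref{laurent_independence} is a clean way to fill in that detail.
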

\begin{proof} The map $g$ is the composition of the inclusions
\[
{^{b^{\ell-1}}}\hspace{-2pt}H^n(M) \rightarrow  {^{b^\ell}}\hspace{-2pt}H^n(M) 
\]
appearing in the short exact sequence (\ref{chly_ses}) for $\ell \leq k$. Therefore, it itself is an inclusion. The proof that $f$ is surjective follows from the same trick used to create a preimage of a closed $\alpha \in \Omega^{p-1}(Z)$ in the proof of Proposition \ref{ses_exact}. Exactness at the middle is straightforward.
\end{proof}

\section{Volume Forms on a $b^k$-manifold}\label{section_volume}

Let $(M, Z, j_Z)$ be a compact $b^k$-manifold, and let $\omega \in {^{b^k}}\hspace{-2pt}\Omega^{\textrm{dim}(M)}(M)$. Because $\omega$ ``blows up'' along $Z$, we cannot expect its integral to be finite. If we remove from $M$ a neighborhood of $Z$, then the integral of $\omega$ over the remainder is finite, but obviously depends on the choice of neighborhood. In this section, we extract a useful invariant of $\omega$ by studying the behavior of this integral as the size of the removed neighborhood shrinks. We will use this invariant to split the short exact sequence (\ref{laurent_ses}), and in doing so make the isomorphism (\ref{noncanonicaliso}) canonical.

The results from this section apply even to non-compact manifolds; so that we may state these results in full generality, we begin by introducing notation for compactly supported de Rham theory.
\begin{definition} The subset ${^{b^k}}\hspace{-2pt}\Omega^p_c(M) \subseteq {^{b^k}}\hspace{-2pt}\Omega^p(M)$ consists of $b^k$-forms with compact support. They form a subcomplex of the {$b^k$-de Rham complex}, the homology of which is called the {\bf compact $b^k$-cohomology} ${^{b^k}}\hspace{-2pt}H^*_c(M)$

\end{definition}

\subsection{Liouville Volume of a $b^k$-form}
\begin{definition} Let $(M, Z, j_Z)$ be an $n$-dimensional $b^k$-manifold. Given $\omega \in {^{b^k}}\hspace{-2pt}\Omega^n_c(M)$, $\epsilon > 0$ small, and a local defining function $y \in j_Z$, define $U_{y, \epsilon} = y^{-1}((-\epsilon, \epsilon))$ and 
\[
\textrm{vol}_{y, \epsilon}(\omega) = \int_{M \backslash U_{y, \epsilon}} \omega
\]
\end{definition}

In \cite{radko}, Radko proved that when $M$ is a surface\footnote{Although Radko studied $b$-forms only on surfaces, her proof of the fact stated here works for all $n$.} and $k = 1$, $\textrm{lim}_{\epsilon \rightarrow 0}\textrm{vol}_{y, \epsilon}(\omega)$ converges and is independent of $y$. This limit, the {\it Liouville volume} of $\omega$, was a key ingredient in her classification of stable Poisson structures on compact surfaces. When $k > 1$, this limit will not necessarily converge to a number, but rather to a polynomial in $\epsilon^{-1}$. After proving the existence and well-definedness of this polynomial, we will define the {\it Liouville volume} of a $b^k$-cohomology class of top degree as the constant term of this polynomial.

\begin{theorem}\label{thm_volumepolynomial} For a fixed $[\omega] \in {^{b^k}}H^n_c(M)$ on a $b^k$-manifold $(M, Z, j_Z)$ with $Z$ compact, there is a polynomial $P_{[\omega]}$ for which 
\begin{equation}\label{eqn_volumepolynomial}
\lim_{\epsilon \rightarrow 0} \left(P_{[\omega]} \left( \epsilon^{-1} \right) - \text{\textnormal{vol}}_{y, \epsilon}(\omega)\right) = 0
\end{equation}
for any $y \in j_Z$ and any $\omega$ representing $[\omega]$.
\end{theorem}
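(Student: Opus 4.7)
Plan: I would split the proof into three parts: existence of the polynomial for a fixed pair $(y, \omega)$, uniqueness of any polynomial satisfying the asymptotic, and independence of the choices of $y \in j_{Z}$ and of representative $\omega \in [\omega]$.

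For existence, fix $y \in j_{Z}$ and $\omega$ representing $[\omega]$. By Remark \ref{proj_gives_ld} one can choose a collar $(y, \pi)\colon U_0 \to (-R, R) \times Z$ and a Laurent expansion
\[
\omega = \sum_{i=1}^{k} \frac{dy}{y^{i}} \wedge \pi^{*}(\gamma_{-i}) + \beta
\]
on $U_0$, with each $\gamma_{-i}$ closed on $Z$ and $\beta$ smooth. Decomposing $M \setminus U_{y,\epsilon} = (M \setminus U_0) \sqcup (U_0 \setminus U_{y,\epsilon})$ and integrating term by term via Fubini, the singular contributions reduce to the one-dimensional integrals
\[
\Big(\int_{\epsilon}^{R} + \int_{-R}^{-\epsilon}\Big)\frac{dy}{y^{i}} \cdot \int_{Z} \gamma_{-i}.
\]
The antiderivatives $\log|y|$ (for $i=1$) and $-\frac{1}{(i-1)y^{i-1}}$ are even in $y$ for odd $i$ and odd for even $i$, so the symmetric integral vanishes when $i$ is odd and equals $\frac{2(\epsilon^{-(i-1)} - R^{-(i-1)})}{i-1}$ when $i$ is even. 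The smooth piece $\int_{U_0 \setminus U_{y,\epsilon}} \beta$ converges to $\int_{U_0}\beta$. Collecting terms yields $\textrm{vol}_{y,\epsilon}(\omega) = P(\epsilon^{-1}) + o(1)$ for a polynomial $P$ of degree at most $k-1$ whose non-constant coefficients are $\tfrac{2}{i-1}\int_{Z}\gamma_{-i}$ for even $i$.

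Uniqueness is immediate: a polynomial in $s = \epsilon^{-1}$ that tends to $0$ as $s \to \infty$ must vanish identically. So to show that $P$ depends only on $[\omega]$, it suffices to prove
\[
\lim_{\epsilon \to 0}\bigl(\textrm{vol}_{y_1,\epsilon}(\omega_1) - \textrm{vol}_{y_2,\epsilon}(\omega_2)\bigr) = 0
\]
for any $y_1, y_2 \in j_{Z}$ and cohomologous $\omega_1, \omega_2$. When $y_1 = y_2$ and $\omega_2 - \omega_1 = d\eta$, Stokes gives $\int_{M \setminus U_{y,\epsilon}} d\eta = -\int_{\{y=\epsilon\}}\eta + \int_{\{y=-\epsilon\}}\eta$; since $dy$ pulls back to zero on each slice $\{y = \pm\epsilon\}$, only the smooth part $\nu$ of $\eta$'s Laurent series contributes, both boundary integrals converge smoothly to $\int_{Z} i^{*}\nu$, and the opposite orientations make them cancel. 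When $\omega_1 = \omega_2 = \omega$ but $y_2 = y_1(1 + y_1^{k-1}g)$ for smooth $g$, the sets $U_{y_1,\epsilon}$ and $U_{y_2,\epsilon}$ agree up to a thin strip of $y_1$-width $O(\epsilon^k)$ near $\{y_1 = \pm\epsilon\}$; using the Laurent decomposition of $\omega$ in $y_1$ and Fubini, each term reduces to a one-dimensional integral $\int dy_1/y_1^i$ over an interval of length $O(\epsilon^k)$. For $i < k$ this is $O(\epsilon^{k-i}) = o(1)$; for $i = k$ the integrand gives an $O(1)$ constant, but the contributions from the two sides $y_1 > 0$ and $y_1 < 0$ cancel in $\textrm{vol}_{y_1,\epsilon} - \textrm{vol}_{y_2,\epsilon}$ (depending on the parity of $k$ and the sign of $g$, either because one strip lies in $U_{y_1,\epsilon} \setminus U_{y_2,\epsilon}$ while the other lies in $U_{y_2,\epsilon} \setminus U_{y_1,\epsilon}$, or because the antiderivative has opposite parity on the two sides of $Z$). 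The smooth part $\int\beta$ over the strip is bounded by its $O(\epsilon^k)$ volume.

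The main obstacle is the $i = k$ case of the last step: the worst singularity $1/y^k$ combined with a strip of width $O(\epsilon^k)$ produces an $O(1)$ contribution that does not vanish term-by-term, and the cancellation must be extracted by a careful direct computation. The argument crucially exploits the jet hypothesis $y_2 - y_1 \in I^{k}$ to pin down the shape of the thin strip together with the orientation data of the collar to guarantee that the two-sided contributions appear with opposite signs.
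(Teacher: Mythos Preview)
Your proposal is correct and follows essentially the same three-part structure as the paper: existence via a Laurent series and Fubini, independence of the representative via Stokes, and independence of $y \in j_Z$ via estimating the integral over the symmetric difference $U_{y_1,\epsilon} \triangle U_{y_2,\epsilon}$. The only substantive difference is in the bookkeeping for the $y$-independence step: where you bound the $i<k$ terms directly by $O(\epsilon^{k-i})$ and isolate the $i=k$ term as the one needing a parity cancellation, the paper treats all $2 \le i \le k$ uniformly by proving (in its technical section) that for any $f$ with $[f]_0^{k-1}=[x]_0^{k-1}$ the four-term expression
\[
\frac{1}{x^{i}} - \frac{1}{(-x)^{i}} + \frac{1}{f(-x)^{i}} - \frac{1}{f(x)^{i}}
\]
is smooth and vanishes at $0$; together with the observation that the fibrewise inverse of $h \in j_Z$ again has $(k-1)$-jet equal to $x$, this is exactly the ``careful direct computation'' you flag as the main obstacle, and it makes your heuristic parity argument for $i=k$ rigorous.
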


\begin{proof}
We first prove that there is a polynomial $P_{[\omega]}$ that satisfies Equation \ref{eqn_volumepolynomial} for a specific $y$ and $\omega$, then we prove that the polynomial is independent of $y$, then that the polynomial vanishes for exact $\omega$ (so depends only on the $b^k$-cohomology class of $\omega$).

Fix a local defining function $y \in  j_Z$ and a closed collar neighborhood $(y, \pi): U \rightarrow [-R, R] \times Z$ of $Z$. Because $\omega$ is compactly supported, $\int_{M \backslash U} \omega < \infty$, so to prove the existence of $P_{[\omega]}$ it suffices to construct a polynomial for the case $M = U$. By Remark \ref{proj_gives_ld}, there exists a Laurent series of $\omega$ of the form
\[
\omega = \sum_{i = 1}^{k} \frac{dy}{y^i} \wedge  \pi^*(\alpha_{-i}) + \beta
\]
where $\alpha_{-i} \in \Omega^{n-1}(Z)$. Then
\begin{align*}
\textrm{vol}_{y, \epsilon}(\omega) &= \int_{U \backslash U_{y, \epsilon}} \sum_{i = 1}^{k} \frac{dy}{y^i} \wedge  \pi^* (\alpha_{-i}) + \int_{U \backslash U_{y, \epsilon}} \beta
\end{align*}
Applying Fubini's theorem (and cancelling log terms), the first term simplifies to
\begin{align*}
\sum_{i = 2}^{k} \frac{-1}{i - 1}   &\left( \left(\frac{1}{R}\right)^{i - 1} + \left(\frac{-1}{\epsilon}\right)^{i - 1} - \left(\frac{-1}{R}\right)^{i - 1}  - \left(\frac{1}{\epsilon}\right)^{i - 1}  \right) \int_Z \alpha_{-i}\\ 
&=\sum_{\substack{i = 2 \\ i \ \textrm{even}}}^{k} \left( \frac{-2R^{1-i}}{i - 1}\right)\int_Z \alpha_{-i} +  \sum_{\substack{i = 2 \\ i \ \textrm{even}}}^{k} \left(\frac{2}{i - 1}\int_Z \alpha_{-i}\right) (\epsilon^{-1})^{i-1}
\end{align*}
and the last term simplifies to 
\[
\int_{U} \beta - \int_{[-\epsilon, \epsilon]\times Z} \beta
\]
so the polynomial 
\begin{align*}
P(t) &= \left(\int_{U}\beta + \sum_{\substack{i = 2 \\ i \ \textrm{even}}}^{k} \left( \frac{-2R^{1-i}}{i - 1}\right)\int_{Z}\alpha_{-i} \right) + \sum_{\substack{i = 2 \\ i \ \textrm{even}}}^{k} \left(\frac{2}{i-1}\int_Z \alpha_{-i}\right) t^{i-1}
\end{align*}
satisfies the conditions of a volume polynomial for this specific choice of $y$ and $\omega$.

The proof that this polynomial does not depend on $y$ is techincal; the details can be found in Section \ref{yucky_section}. To show that the polynomial associated to any exact form is trivial, suppose $\omega$ is exact and let 
\[
\eta = \sum_{i = 1}^{k} \frac{dy}{y^i} \wedge \pi^*\eta_{-i} + \beta_{\eta}
\]
be a Laurent series of a primitive of $\omega$. Then
\[
\int_{M \backslash U_{y, \epsilon}} \omega = \int_{\partial(M \backslash U_{y, \epsilon})} \eta = \int_{\partial (M \backslash U_{y, \epsilon})} \beta_{\eta}
\]
which approaches 0 as $\epsilon \rightarrow 0$.
\end{proof}

\begin{definition} The polynomial $P_{[\omega]}$ described in Theorem \ref{thm_volumepolynomial} is the {\bf volume polynomial} of $[\omega]$. Its constant term $P_{[\omega]}(0)$ is the {\bf Liouville volume} of $[\omega]$.
\end{definition}
The Liouville volume of $[\omega]$ can be thought of as the volume that remains of $[\omega]$ after its singular parts have been carefully discarded. For arbitrary (non-$b^k$) singularities of a form of top degree, no similar concept exists. In the $b^k$ case, the definition is made possible by how well-behaved $b^k$ singularities are, as well as by how we use the jet data (when $k > 1$) to prescribe the asymptotic manner in which $U_{y, \epsilon}$ approaches $Z$ as $\epsilon \rightarrow 0$.

We may also define the {\it Liouville volume} of a $p<\textrm{dim}(M)$ dimensional $b^k$-form $\omega$ along a compact $p$-dimensional submanifold $Y \subseteq M$ transverse to $Z$: the pullback of $\omega$ will be a $b^k$-form of top degree for the induced $b^k$-structure on $Y$ and therefore has a Louville volume. By Poincar\'{e} duality, this remark inspires the definition of the {\it smooth part} of a $b^k$-form.
\begin{definition}
Let $[\omega] \in {^{b^k}}\hspace{-2pt}H^{p}(M)$. The image of $[\omega]$ under the map
\begin{align}\label{splitting}
{^{b^k}}\hspace{-2pt}H^{p}(M) &\rightarrow (H^{n-p}_c(M))^* \cong H^p(M)\\
[\omega] &\mapsto \left( [\eta] \mapsto P_{[\omega \wedge \eta]}(0)\right) \notag
\end{align}
is its {\bf smooth part} $[\omega_{\textrm{sm}}] \in H^p(M)$.
\end{definition}
If $[\omega]$ is smooth (that is, $[\omega] \in \hspace{-2pt}H^{n}(M)  \subseteq {^{b^k}}\hspace{-2pt}H^{n}(M)$), then so too is $[\omega \wedge \eta]$ smooth for all $[\eta] \in (H^{n-p}_c(M))^*$. In this case, it follows that $P_{[\omega \wedge \eta]}(0)$ equals $\int_M \omega \wedge \eta$ and that $[\omega] = [\omega_{\textrm{sm}}]$. This remark shows that Equation \ref{splitting} splits the short exact sequence from Equation \ref{laurent_ses}, which yields a canonical isomorphism, the {\bf Liouville-Laurent} isomorphism, that realizes the (abstract) isomorphism from Proposition \ref{noncanonicaliso}.
\begin{align}\label{canonical_isomorphism}
\varphi: {^{b^k}}\hspace{-2pt}H^n(M) &\cong H^n(M) \oplus \left( H^{n-1}(Z) \right)^k\\
[\omega] &\mapsto ([\omega_{\textrm{sm}}], [\alpha_{-1}], \dots, [\alpha_{-k}])\notag
\end{align}
\begin{definition} Let $\omega$ be a $b^k$-form of top degree.  The {\bf Liouville-Laurent} decomposition of $[\omega]$ is its image under Equation \ref{canonical_isomorphism}, $([\omega_{\textrm{sm}}], [\alpha_{-1}], \dots, [\alpha_{-k}])$.
\end{definition}
The following proposition shows that taking the Liouville-Laurent decomposition of a $b^k$-form commutes with taking its pullback under a $b^k$-map.

\begin{proposition}\label{pullback_commute} Let $\varphi: (M, Z, j_Z) \rightarrow (M', Z', j_{Z'})$ be a $b^k$-map, and $[\omega'] \in {^{b^k}}\hspace{-2pt}H^p(M^{\prime})$ have Liouville-Laurent decomposition $([\omega_{\textrm{sm}}'], [\alpha_{-1}'], \dots, [\alpha_{-k}'])$. Then $[\varphi^*(\omega')]$ has Liouville-Laurent decomposition 
\[
([\varphi^*(\omega_{\textrm{sm}}')], [\restr{\varphi}{Z}^*(\alpha_{-1}')], \dots, [\restr{\varphi}{Z}^*(\alpha_{-k}')]).
\]
\end{proposition}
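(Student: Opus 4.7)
The strategy is to verify naturality of the two components of the Liouville-Laurent decomposition separately: the Laurent coefficients, which is straightforward, and the smooth part, which reduces to a projection formula for the volume polynomial.

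For the Laurent part: since $\varphi$ is a $b^k$-map, $\varphi^*(j_{Z'}) = j_Z$, so for any local defining function $y' \in j_{Z'}$ the pullback $y := \varphi^*(y')$ lies in $j_Z$. A Laurent series
\[
\omega' = \sum_{i=1}^k \frac{dy'}{(y')^i} \wedge \alpha'_{-i} + \beta'
\]
for $\omega'$ near $Z'$ pulls back to
\[
\varphi^*\omega' = \sum_{i=1}^k \frac{dy}{y^i} \wedge \varphi^*\alpha'_{-i} + \varphi^*\beta',
\]
a valid Laurent series for $\varphi^*\omega'$ near $Z$, since $\varphi^*$ commutes with $d$ and preserves closedness. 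From $\varphi \circ i_Z = i_{Z'} \circ \restr{\varphi}{Z}$ one obtains $i_Z^* \varphi^* = \restr{\varphi}{Z}^* i_{Z'}^*$, and Proposition \ref{laurent_independence} then identifies the Laurent decomposition of $[\varphi^*\omega']$ as the entrywise $\restr{\varphi}{Z}^*$-pullback of that of $[\omega']$, yielding the claimed Laurent part.

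For the smooth part, I would show $[\varphi^*\omega']_{\textnormal{sm}} = \varphi^*[\omega'_{\textnormal{sm}}]$ in $H^p(M)$. By Poincar\'e duality it suffices to check this equality against every $[\eta] \in H^{n-p}_c(M)$. Unfolding definitions, the left pairing gives $P_{[\varphi^*\omega' \wedge \eta]}(0)$, while the right pairing expands as
\[
\int_M \varphi^*\omega'_{\textnormal{sm}} \wedge \eta = \int_{M'} \omega'_{\textnormal{sm}} \wedge \varphi_*\eta = P_{[\omega' \wedge \varphi_*\eta]}(0),
\]
using smoothness of $\varphi^*\omega'_{\textnormal{sm}}$, the ordinary form-level projection formula, and the defining property of $[\omega'_{\textnormal{sm}}]$. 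Thus the equality reduces to a projection formula for the volume polynomial,
\[
P_{[\varphi^*\omega' \wedge \eta]}(0) = P_{[\omega' \wedge \varphi_*\eta]}(0).
\]

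This identity is the main obstacle. The key observation is that $y = \varphi^*(y')$ yields $\varphi^{-1}(U_{y', \epsilon}) = U_{y, \epsilon}$, so the truncated integrals match via the ordinary projection formula:
\[
\int_{M \backslash U_{y, \epsilon}} \varphi^*\omega' \wedge \eta = \int_{M' \backslash U_{y', \epsilon}} \omega' \wedge \varphi_*\eta.
\]
Hence $\textnormal{vol}_{y, \epsilon}(\varphi^*\omega' \wedge \eta) = \textnormal{vol}_{y', \epsilon}(\omega' \wedge \varphi_*\eta)$ for every small $\epsilon$, so the volume polynomials coincide and their constant terms in particular agree. The hardest technical point is making $\varphi_*\eta$ sensibly defined at the form level: this needs either the assumption that $\varphi$ is a proper submersion (so that $\varphi_*\eta$ is realized by fiber integration) or, more generally, interpreting $\varphi_*\eta$ as a de Rham current on $M'$ and verifying the pairings still make sense in that weaker setting.
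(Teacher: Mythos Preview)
Your treatment of the Laurent part is correct and essentially identical to the paper's. Your reduction of the smooth part to the identity $P_{[\varphi^*\omega' \wedge \eta]}(0) = P_{[\omega' \wedge \varphi_*\eta]}(0)$, with $\varphi_*\eta$ a closed form representing the Poincar\'e-dual pushforward of $[\eta]$, is also exactly the paper's strategy.

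The gap is in your justification of that identity. The displayed equation
\[
\int_{M \backslash U_{y, \epsilon}} \varphi^*\omega' \wedge \eta \;=\; \int_{M' \backslash U_{y', \epsilon}} \omega' \wedge \varphi_*\eta
\]
is in general \emph{false} for a fixed closed-form representative $\varphi_*\eta$: the Poincar\'e-duality definition of $\varphi_*$ only guarantees $\int_M \varphi^*\xi \wedge \eta = \int_{M'} \xi \wedge \varphi_*\eta$ for \emph{closed smooth} $\xi$ integrated over \emph{all} of $M'$, and truncating to the complement of a tube destroys this (the discrepancy is a boundary term on $\partial U_{y',\epsilon}$ that need not vanish). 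Your two proposed fixes do not close the gap either. Assuming $\varphi$ is a proper submersion is far too restrictive for a general $b^k$-map. Passing to currents makes the displayed equality tautological, but then $\omega' \wedge \varphi_*\eta$ is no longer a $b^k$-form, Theorem~\ref{thm_volumepolynomial} does not apply, and $P_{[\omega' \wedge \varphi_*\eta]}$ is undefined; you would have to rebuild the volume-polynomial theory for currents.

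The paper supplies the missing device: for each small $\epsilon$ it introduces a \emph{smooth closed} form $\omega_\epsilon' \in \Omega^p(M')$, obtained from a Laurent series of $\omega'$ by multiplying the singular summands by a cutoff $f_\epsilon(y')$ that equals $1$ outside $U_{y',\epsilon}$ and vanishes on a slightly smaller tube. Since $\omega_\epsilon'$ is smooth and closed, the cohomological projection formula gives the \emph{exact} equality $\int_M \varphi^*\omega_\epsilon' \wedge \eta = \int_{M'} \omega_\epsilon' \wedge \varphi_*\eta$, while by construction each side differs from the corresponding truncated integral $\textnormal{vol}_{y,\epsilon}(\varphi^*\omega'\wedge\eta)$, respectively $\textnormal{vol}_{y',\epsilon}(\omega'\wedge\varphi_*\eta)$, by a quantity tending to $0$ as $\epsilon\to 0$. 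Chaining these yields $P_{[\varphi^*\omega' \wedge \eta]} = P_{[\omega' \wedge \varphi_*\eta]}$ as polynomials, and hence the constant terms agree. This smooth-closed-cutoff trick is the idea your argument is missing.
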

\begin{proof}
Let $y' \in j_{Z'}$, and $i_{Z}: Z \rightarrow M$, $i_{Z'}: Z' \rightarrow M'$ be the inclusions. By the definition of a $b^k$-map, $y := \varphi^*(y')$ represents $j_Z$. Then for a Laurent series of $\omega'$,
\[
\omega' = \sum_{i = 1}^{k} \frac{dy'}{y'^i} \wedge \pi^*\alpha_{-i}' + \beta',
\]
the pullback of $\omega'$ has Laurent series
\[
\varphi^*(\omega') = \sum_{i = 1}^{k} \frac{dy}{y^i} \wedge \varphi^*(\pi^*\alpha_{-i}') + \varphi^*(\beta').
\]
and we see that $[\varphi^*(\omega')]$ has Laurent decomposition 
\[
([i_{Z}^*(\varphi^*(\pi^*\alpha_{-1}'))], \dots, [i_{Z}^*(\varphi^*(\pi^*\alpha_{-k}'))]) = ([\restr{\varphi}{Z}^*(i_{Z'}^*(\pi^*\alpha_{-1}'))], \dots, [\restr{\varphi}{Z}^*(i_{Z'}^*(\pi^*\alpha_{-k}'))])
\]
which proves that the Laurent decomposition commutes with pullback.

Let $[\eta] \in H^{n-p}_c(M)$. To prove that $[\varphi^*(\omega')_{\textrm{sm}}] = [\varphi^*(\omega_{\textrm{sm}}')]$, it suffices to show that
\begin{equation}\label{eqn_pullback}
P_{[\varphi^*(\omega') \wedge \eta]}(0) = \int_M \varphi^*(\omega_{\textrm{sm}}') \wedge \eta.
\end{equation}

Our strategy for proving Equation \ref{eqn_pullback} will be to introduce an auxiliary family of smooth closed differential forms $\omega_{\epsilon}' \in \Omega^p(M')$ with the property that the Liouville volume of $\varphi^*(\omega') \wedge \eta$ can be calculated in terms of the asymptotic behavior of $\int_{M} \varphi^*(\omega_{\epsilon}') \wedge \eta$ instead of $\int_{M \backslash U_{y, \epsilon}} \varphi^*(\omega') \wedge \eta$. 

For $\epsilon > 0$ small, let $f_{\epsilon}: \mathbb{R} \rightarrow [0, 1]$ be a smooth function such that 
\[
\restr{f_{\epsilon}}{\mathbb{R} \backslash (-\epsilon, \epsilon)} = 1 \hspace{1cm} \textrm{and} \hspace{1cm} \restr{f_{\epsilon}}{(-\epsilon + \textrm{exp}(-\epsilon^{-1}), \epsilon - \textrm{exp}(-\epsilon^{-1}))} = 0
\]
and assume that $f_{\epsilon}$ varies smoothly with $\epsilon$. Define
\[
\omega_{\epsilon}' = \sum_{i = 1}^{k} f_{\epsilon}(y')\frac{dy'}{y'} \wedge \pi^* \alpha_{-i}' + \beta'
\]
and observe that $\omega_{\epsilon}'$ is closed and that $ \int_{M} \varphi^*(\omega_{\epsilon}') \wedge \eta $ approaches $\textnormal{vol}_{y, \epsilon}(\varphi^*(\omega') \wedge \eta)$ as $\epsilon \rightarrow 0$.

Next, recall that the pullback map in de Rham cohomology induces (by Poincar\'{e} duality) a pushforward map in compactly supported cohomology; we will use the notation $\varphi_*\eta$ for a representative of the pushforward of $[\eta] \in H^{n-p}_c(M)$. Using this notation,
\begin{align*}
0 &= \textrm{lim}_{\epsilon \rightarrow 0}\left(P_{[\varphi^*(\omega') \wedge \eta]}(\epsilon^{-1}) - \textnormal{vol}_{y, \epsilon}(\varphi^*(\omega') \wedge \eta)\right)\\
&= \textrm{lim}_{\epsilon \rightarrow 0}\left(P_{[\varphi^*(\omega') \wedge \eta]}(\epsilon^{-1}) - \int_{M} \varphi^*(\omega_{\epsilon}') \wedge \eta\right)\\
&= \textrm{lim}_{\epsilon \rightarrow 0}\left(P_{[\varphi^*(\omega') \wedge \eta]}(\epsilon^{-1}) - \int_{M'} \omega_{\epsilon}' \wedge \varphi_*\eta\right)\\
&= \textrm{lim}_{\epsilon \rightarrow 0}\left(P_{[\varphi^*(\omega') \wedge \eta]}(\epsilon^{-1}) - P_{[\omega' \wedge \varphi_*\eta]}(\epsilon^{-1})\right)\\
\end{align*}
so
\[
P_{[\varphi^*(\omega') \wedge \eta]}(0) = \int_{M'} \omega_{\textrm{sm}}' \wedge \varphi_*\eta = \int_M \varphi^*(\omega_{\textrm{sm}}') \wedge \eta
\]
which proves Equation \ref{eqn_pullback}.


\end{proof}

\subsection{$b^k$ Orientation}

The notion of orientability of a smooth manifold generalizes in an obvious way to the $b^k$-world.
\begin{definition} A {\bf volume $b^k$-form} on a $b^k$ manifold is a nowhere vanishing $b^k$-form of top degree. A $b^k$-manifold is {\bf $b^k$-orientable} if it admits a volume $b^k$-form. A {\bf $b^k$-orientation} on a connected orientable $b^k$-manifold is a choice of one of the two connected components of the space of volume $b^k$-forms. 
\end{definition}
Although the underlying smooth manifold of every $b^k$-manifold is orientable (an orientation for $M$ is included in the data of a $b^k$-manifold), not all $b^k$-manifolds are $b^k$-orientable. For example, if $Z \subseteq M$ is a meridian of the torus $S^1 \times S^1$ (so $M \backslash Z$ is connected), the corresponding $b^1$-manifold admits no volume $b^1$-form even though $M$ is orientable. The opposite is also true: if you remove from the definition of a $b^k$-manifold the condition that $M$ is oriented, then it remains possible to define the $b^k$-(co)tangent bundles, and according to these new definitions there would exist $b^k$-manifolds that admit a $b^k$-orientation even though the underlying manifold is unorientable. For example, if $Z \subseteq M$ is a meridian of the Klein bottle, there exists a volume $b^1$-form on the corresponding $b^1$-manifold even though $M$ is not orientable. Although it is possible to study the $b^k$-geometry of non-orientable manifolds by modifying the definition of a $b^k$-manifold in this way, omitting the data of an orientation makes it impossible to define the Liouville volume of a $b^k$-form of top degree. It is for this reason that we have restricted our attention to $b^k$-structures on oriented manifolds in this paper.

Notice that the image under $\iota_{\mathbb{L}}$ of a volume $b^k$-form $\omega$ will be a smooth volume form on $Z$. In this way, a $b^k$-orientation on $(M, Z, j_Z)$ induces an orientation on $Z$ which may or may not agree with the orientation of $Z$ given in the data of a $b^k$-manifold.
\begin{definition}\label{def_posoriented}
Let $\omega$ be a volume $b^k$-form on $(M, Z, j_Z)$. If the smooth form $\iota_{\mathbb{L}}(\omega)$ is positively oriented, we say that $\omega$ is a {\bf positively oriented} volume $b^k$-form.
\end{definition}

Notice that if $\omega$ is a volume $b^k$-form which is {\it not} positively oriented, one can replace the $b^k$ structure on $(M, Z, j_Z)$ with a different $b^k$ structure for which $\omega$ is a positively oriented volume $b^k$-form. To do so, reverse the orientations of those components $Z'$ of $Z$ for which $\restr{\iota_{\mathbb{L}}(\omega)}{Z'}$ is negatively oriented, and replace the jet data for those $Z'$ with their negatives.



\section{Symplectic and Poisson Geometry of $b^k$-Forms}\label{section_symplectic}

We begin this section by introducing the notion of a symplectic $b^k$-form and proving Moser's theorems in the $b^k$-category. We then classify symplectic $b^k$-surfaces, and show how the Liouville-Laurent decomposition of a $b$-symplectic form on a surface reconciles a classification theorem from \cite{gmp2} with one from \cite{radko}.
\begin{definition} A {\bf symplectic $b^k$-form} on a $b^k$-manifold is a closed $b^k$ 2-form having maximal rank at every $p \in M$.
\end{definition}
\begin{definition} A {\bf symplectic $b^k$-manifold} $(M, Z, j_Z, \omega)$ is a $b^k$-manifold $(M, Z, j_Z)$ with a symplectic $b^k$-form $\omega$. \end{definition}
\begin{definition} A {\bf $b^k$-symplectomorphism} $\varphi: (M, Z, j_Z, \omega) \rightarrow (M', Z', j_{Z'}, \omega')$ is a $b^k$-map satisfying $\varphi^*(\omega') = \omega$. \end{definition}
\begin{theorem} (relative Moser's theorem) If $\omega_0, \omega_1$ are symplectic $b^k$-forms on $(M, Z, j_Z)$ with $Z$ compact, $\restr{\omega_0}{Z} = \restr{\omega_1}{Z}$, and $[\omega_0] = [\omega_1]$, then there are neighborhoods $U_0, U_1$ of $Z$ and a $b^k$-symplectomorphism $\varphi: (U_0, Z, j_Z, \omega_0) \rightarrow (U_1, Z, j_Z, \omega_1)$ such that $\restr{\varphi}{Z} = \textnormal{id}$.
\end{theorem}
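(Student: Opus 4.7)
The plan is to adapt the classical Moser argument to the $b^k$-category. Set $\omega_t = (1-t)\omega_0 + t\omega_1$. Since $\omega_0$ and $\omega_1$ agree on $Z$, all the $\omega_t$ agree on $Z$, in particular they are all of maximal rank as $b^k$-forms at every point of $Z$. Because maximal rank is an open condition, and $Z$ is compact, there is a tubular neighborhood $U$ of $Z$ on which $\omega_t$ is a symplectic $b^k$-form for every $t\in[0,1]$. On $U$ the map $X\mapsto \iota_X \omega_t$ is then a fiberwise isomorphism $^{b^k}TU\to {^{b^k}}T^*U$, so once we have a suitable primitive $\mu$ for $\omega_1-\omega_0$ we can invert to obtain a time-dependent $b^k$-vector field $v_t$.

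The main obstacle, which I expect to be the real content of the proof, is producing $\mu \in {^{b^k}}\Omega^{1}(U)$ satisfying both $d\mu = \omega_1-\omega_0$ and $\restr{\mu}{Z}=0$ as a section of $^{b^k}T^*M|_Z$. The cohomological hypothesis $[\omega_0]=[\omega_1]$ gives some $b^k$-primitive $\eta$ of $\omega_1-\omega_0$ on $M$, and the restriction $\restr{\eta}{Z}$ is a closed section of $^{b^k}T^*M|_Z$. One then proves a relative Poincar\'{e} lemma for $b^k$-forms on a collar of a compact hypersurface: fix a collar $(y,\pi):U\to (-R,R)\times Z$ with $y\in j_Z$ and apply a homotopy operator $h$ of the form $h(\gamma)=\int_{0}^{1} \iota_{X}\, r_s^*\gamma\,ds$, where $r_s$ is the scaling $y\mapsto sy$ and $X$ is the corresponding radial vector field. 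Two facts need to be verified: first, that this operator sends $b^k$-forms to $b^k$-forms (which may be seen either by applying it Laurent-term by Laurent-term using the expression from Remark \ref{proj_gives_ld}, or by direct computation in local coordinates since $\iota_X$ raises one power of $y$ in the Laurent series); second, that $\mu := \eta - d(h(\eta))$ has the prescribed vanishing on $Z$. The first property uses that $X$ vanishes on $Z$ to ensure no new polar terms are introduced. Shrinking $U$ if necessary, we obtain the desired $\mu$.

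With $\mu$ in hand, define $v_t$ on $U$ by $\iota_{v_t}\omega_t = -\mu$. Since $\restr{\mu}{Z}=0$ and $\omega_t$ is nondegenerate as a $b^k$-form, $v_t$ vanishes on $Z$ as a section of $^{b^k}TM$, hence vanishes on $Z$ as an ordinary vector field (by Remark \ref{tech_rmk}). Compactness of $Z$ together with $\restr{v_t}{Z}=0$ ensures that the time-dependent flow $\varphi_t$ of $v_t$ is defined for all $t\in[0,1]$ on some neighborhood $U_0\subseteq U$ of $Z$, and fixes $Z$ pointwise. Because each $v_t$ is a $b^k$-vector field, its flow preserves the jet data $j_Z$ (the condition $\mathcal{L}_{v_t}(f)\in I^k$ for $f\in j_Z$ implies that $\varphi_t^*(f)$ remains in $j_Z$), so $\varphi_t$ is a $b^k$-map into some $U_1\subseteq U$.

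The Moser computation then goes through verbatim in the $b^k$-setting: the differential $d$ and interior product $\iota$ are defined on $b^k$-forms and satisfy the Cartan formula, so
\[
\frac{d}{dt}\varphi_t^*\omega_t
= \varphi_t^*\!\left(\mathcal{L}_{v_t}\omega_t + \tfrac{d\omega_t}{dt}\right)
= \varphi_t^*\!\left(d\iota_{v_t}\omega_t + (\omega_1-\omega_0)\right)
= \varphi_t^*(-d\mu + d\mu)
= 0.
\]
Hence $\varphi_1^*\omega_1 = \omega_0$, and $\varphi := \varphi_1$ is the desired $b^k$-symplectomorphism $(U_0,Z,j_Z,\omega_0)\to(U_1,Z,j_Z,\omega_1)$ with $\restr{\varphi}{Z}=\textnormal{id}$.
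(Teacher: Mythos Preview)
Your overall Moser strategy matches the paper's: construct a $b^k$-primitive $\mu$ of $\omega_1 - \omega_0$ that vanishes along $Z$, solve $\iota_{v_t}\omega_t = -\mu$, and flow. The gap is in how you build $\mu$. The radial-scaling homotopy operator is \emph{not} well-defined on $b^k$-forms: the generating field of $r_s(y)=sy$ is $\xi_s = s^{-1} y\,\partial_y$, and applying the operator to the basic singular piece $dy/y^k$ yields $\int_0^1 s^{-k} y^{1-k}\,ds$, which diverges for every $k\geq 1$. Your heuristic that ``$\iota_X$ raises one power of $y$'' is correct in the sense that $\iota_{y\partial_y}(dy/y^k)=y^{1-k}$, but $y^{1-k}$ is a singular \emph{function} for $k\geq 2$, not an element of ${^{b^k}}\Omega^0 = C^\infty$, so the output is not a $b^k$-form. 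Appealing to Remark~\ref{proj_gives_ld} does not help either: that remark produces a Laurent series only for \emph{closed} $b^k$-forms, whereas you propose to apply $h$ to the primitive $\eta$, which need not be closed.

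The paper avoids any $b^k$-level homotopy operator. It fixes Laurent series of $\omega_0$ and $\omega_1$, so that $\omega_1 - \omega_0 = \sum_i \frac{dy}{y^i}\wedge(\alpha'_{-i}-\alpha_{-i}) + (\beta'-\beta)$ with each $\alpha'_{-i}-\alpha_{-i}$ a closed \emph{smooth} form near $Z$. The hypothesis $[\omega_0]=[\omega_1]$ together with Proposition~\ref{laurent_independence} makes each $i^*(\alpha'_{-i}-\alpha_{-i})$ exact on $Z$; the hypothesis $\omega_0|_Z=\omega_1|_Z$ gives $i^*(\alpha'_{-k}-\alpha_{-k})=0$ and $i^*(\beta'-\beta)=0$. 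The ordinary (smooth) relative Poincar\'e lemma then furnishes smooth primitives $\mu_{-i}$ and $\mu_\beta$, with $\mu_{-k}|_Z=\mu_\beta|_Z=0$, and one assembles $\mu = \sum_i \frac{dy}{y^i}\wedge\mu_{-i} + \mu_\beta$. This $\mu$ is manifestly a $b^k$ 1-form and vanishes on $Z$ as a section of ${^{b^k}}T^*M$, which is exactly what the Moser argument needs.
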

\begin{proof} Pick a local defining function $y \in j_Z$ and Laurent series of $\omega_{0}, \omega_{1}$
\[
\omega_0 = \sum_{i = 1}^{k} \frac{dy}{y^i} \wedge \alpha_{-i} + \beta
 \ \ \ \ \ \ \ \ \ \ \ \ 
\omega_1 = \sum_{i = 1}^{k} \frac{dy}{y^i} \wedge \alpha'_{-i} + \beta'.
\]
Then $i^*(\alpha'_{-i} - \alpha_{-i}) \in \Omega^1(Z)$ is exact for all $i$, and $i^*(\alpha'_{-k} - \alpha_{-k}) = i^*(\beta' - \beta) = 0$. By the relative Poincar\'{e} lemma there are primitives $\mu_i$ of $(\alpha'_{-i} - \alpha_{-i})$ and $\mu_{\beta}$ of $(\beta' - \beta)$ with $\restr{\mu_{-k}}{Z} = \restr{\mu_{\beta}}{Z} = 0$. Then $\omega_1 - \omega_0 = d\mu$, where 
\[
\mu = \sum_{i = 1}^{k} \frac{dy}{y^i} \wedge \mu_{-i}  + \mu_{\beta}\textrm{.}
\]
Let $\omega_t = t\omega_1 + (1-t)\omega_0$, and observe that $d{\omega_t}/dt = d\mu$. By shrinking our neighborhood around $Z$, we can assume that $\omega_t$ has full rank for all $t$, giving a pairing between $b^k$-vector fields and $b^k$ 1-forms. Because $\mu$ is a $b^k$ 1-form vanishing on $Z$ (since $\restr{\mu_{-k}}{Z} = 0$ and $\restr{\mu_{\beta}}{Z} = 0$), the vector field $v_t$ defined by Moser's equation
\[
\iota_{v_t} \omega_t = -\mu
\]
is a $b^k$-vector field that vanishes on $Z$, the time-one flow of which is the desired $b^k$-symplectomorphism. 
\end{proof}

\begin{theorem}\label{global_moser} (global Moser's theorem) Let $(M, Z, j_Z)$ be a compact $b^k$-manifold, and $\omega_t := t\omega_1 + (1-t)\omega_0$ a symplectic $b^k$-form for $t \in [0, 1]$, with $[\omega_0] = [\omega_1]$. Then there is an isotopy $\rho_t$ of $b^k$-maps with $\rho_t^*(\omega_t) = \omega_0$ for $t \in [0, 1]$. 
\end{theorem}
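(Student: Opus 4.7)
The plan is to execute the classical Moser trick, taking care that all objects produced live in the $b^k$ category. First I would use $[\omega_0] = [\omega_1]$ in $^{b^k}\hspace{-2pt}H^2(M)$ to obtain a $b^k$ 1-form $\mu$ with $d\mu = \omega_1 - \omega_0$, so that $\dot\omega_t = \omega_1 - \omega_0 = d\mu$. Since $\omega_t$ is a symplectic $b^k$-form for every $t$, the contraction map $v \mapsto \iota_v \omega_t$ is an isomorphism from $b^k$-vector fields to $b^k$ 1-forms, so Moser's equation $\iota_{v_t} \omega_t = -\mu$ has a unique solution $v_t$, and $v_t$ is a $b^k$-vector field. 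Compactness of $M$ guarantees the flow $\rho_t$ of $v_t$ exists on $[0,1]$.

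Next I would check that $\rho_t$ is a $b^k$-map for every $t$. Because $v_t$ is a $b^k$-vector field, it is in particular tangent to $Z$, so the flow $\rho_t$ preserves $Z$. To verify that $\rho_t^*(j_Z) = j_Z$, pick any $f \in j_Z$ and set $g_t = \rho_t^*(f)$. Then
\[
\frac{d g_t}{dt} = \rho_t^*(\mathcal{L}_{v_t} f),
\]
and since $v_t$ is a $b^k$-vector field we have $\mathcal{L}_{v_t}(f) \in I^k$; because $\rho_t$ preserves $Z$ setwise, pulling back a function in $I^k$ gives a function in $I^k$, so $dg_t/dt \in I^k$. Integrating against $t$ yields $g_t - f \in I^k$, i.e. $[g_t]^{k-1} = [f]^{k-1}$, so $\rho_t$ preserves the jet data as required.

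Finally, the standard Moser computation applies verbatim in this setting:
\[
\frac{d}{dt}\rho_t^*(\omega_t) = \rho_t^*\bigl(\mathcal{L}_{v_t}\omega_t + \dot\omega_t\bigr) = \rho_t^*\bigl(d\iota_{v_t}\omega_t + d\mu\bigr) = \rho_t^*\bigl(-d\mu + d\mu\bigr) = 0,
\]
so $\rho_t^*(\omega_t) = \rho_0^*(\omega_0) = \omega_0$ for all $t \in [0,1]$.

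The main obstacle is really the middle step, which has no analogue in the ordinary symplectic Moser argument: one must verify that the Moser vector field, although tangent to $Z$, also integrates to diffeomorphisms preserving the extra rigidity of the $(k-1)$-jet $j_Z$. The key ingredients are (i) that $b^k$-vector fields preserve the ideal $I^k$ under Lie derivative (which follows from Remark \ref{this_remark} and the local form in Example \ref{the_above_example}) and (ii) that a $Z$-preserving diffeomorphism pulls $I^k$ into $I^k$; together these let us run the ODE in the quotient $C^\infty/I^k$ and conclude that the jet is preserved. Once this is in hand, the Moser identity is formal.
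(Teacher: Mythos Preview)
Your proof is correct and follows exactly the same strategy as the paper's: produce a $b^k$-primitive $\mu$ of $\omega_1-\omega_0$, solve Moser's equation using the nondegeneracy of $\omega_t$ to get a $b^k$-vector field $v_t$, and flow. The paper simply asserts that the flow of a $b^k$-vector field is an isotopy of $b^k$-maps, whereas you supply the verification that the $(k-1)$-jet is preserved; your argument for that step is sound and is a detail the paper leaves implicit.
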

\begin{proof} Because $\frac{d\omega_t}{dt} = \omega_1 - \omega_0$ is exact, there is a smooth ${b^k}$-form $\mu$ such that $d\mu = \omega_1 - \omega_0$. Because $\omega_t$ is a $b^k$-form, it defines an pairing between $b^k$ 1-forms and $b^k$-vector fields. Therefore, the vector field $v_t$ defined by Moser's equation
\[
\iota_{v_t}\omega_t = -\mu
\]
is a $b^k$-vector field, so its flow defines an isotopy $\rho_t$ of $b^k$-maps with $\rho_t^*(\omega_t) = \omega_0$.
\end{proof}

\subsection{Classification of Symplectic $b^k$-Surfaces}

In \cite{radko}, the author classifies the space of stable Poisson structures on a connected, compact surface in terms of geometric data. In \cite{gmp2}, the authors demonstrate a correspondence between stable Poisson structures and $b$-symplectic forms on a manifold, and classify $b$-symplectic forms on a connected, compact surface in terms of their $b$-cohomology class. Pictorially, we have two sides of the triangle

\begin{center}
\begin{tikzpicture}
\draw (5, 0) node (box1){
$\left\{ \begin{array}{c} \textrm{Symplectic} \\ b\textrm{-forms on} \ (M, Z)  \end{array} \middle\} \right/ b\textrm{-symp.}$
};

\draw (2, 2.5) node (box2){
$\left\{ \begin{array}{c} \textrm{L. Vol} \in \mathbb{R} \\ \{\textrm{pd}(\gamma_i)\}_{i = 1}^r \in \mathbb{R}_{>0}^r \end{array} \right\}$
};

\draw (8, 2.5) node (box3){
$^{b}H^2(M)$
};

\draw[<->] (3, 2) -- node[above, rotate=-45] {\cite{radko}} (4.5, 0.5);
\draw[<->] (7, 2) -- node[above, rotate=45] {\cite{gmp2}} (5.5, 0.5);
\end{tikzpicture} \end{center}

\noindent where $M$ is a connected, compact surface, $\{\gamma_i\}$ are the $r$ oriented circles that constitute $Z$, $\textrm{L. Vol} \in \mathbb{R}$ is the Liouville volume of $(M, Z, \omega)$, and $\textrm{pd}(\gamma_i)$ is the period of the modular vector field on $\gamma_i$.

Theorem \ref{aoeuaoeu} completes the triangle. That is, it exhibits a direct connection between the cohomological classification data in \cite{gmp2} and the geometric classification data in \cite{radko}.

\begin{theorem}\label{aoeuaoeu} Let $[\omega] = ([\omega_{\textrm{sm}}], [\alpha_{-1}])$ be the Liouville-Laurent decomposition of a positively oriented $b$-symplectic form on a connected compact surface. Let $\{\gamma_r\}$ be the oriented circles that constitute $Z$. Then the Liouville volume of $\omega$ is $\int_{M}\omega_{\textrm{sm}}$, and the period of the modular vector field on $\gamma_r$ is
\[
\left(\int_{\gamma_r} \alpha_{-1}\right)^{-1}
\]
\end{theorem}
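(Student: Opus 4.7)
The plan is to prove the two assertions separately. For the Liouville volume identity, the strategy is to directly unpack the definition of the smooth part. Recall from Equation \ref{splitting} that $[\omega_{\textrm{sm}}] \in H^2(M)$ is the Poincar\'e dual of the functional $[\eta] \mapsto P_{[\omega \wedge \eta]}(0)$ on $H^0_c(M)$. Since $M$ is compact and connected, $H^0_c(M) \cong \mathbb{R}$ is generated by the constant function $1$, and evaluating the functional at $[1]$ returns $P_{[\omega]}(0)$, i.e.\ the Liouville volume of $[\omega]$ by definition. The Poincar\'e pairing on a closed oriented surface being integration, we conclude $\int_M \omega_{\textrm{sm}} = P_{[\omega]}(0)$, which is the first claim.

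For the modular period, I would reduce to a local computation in a collar neighborhood $(y, \pi) : U \cong (-\epsilon, \epsilon) \times \gamma_r$ with $y \in j_Z$. By Remark \ref{proj_gives_ld} one may take a Laurent representative
\[
\omega = \frac{dy}{y} \wedge \pi^*(\alpha_{-1}) + \beta
\]
on $U$, with $\beta$ smooth. Picking an angular coordinate $\theta$ on $\gamma_r$ and writing $\alpha_{-1} = f(\theta)\,d\theta$, we obtain $\omega = \tfrac{f(\theta)}{y}\,dy \wedge d\theta + \beta$ near $\gamma_r$. Inverting $\omega$ as a symplectic form on a surface, the dual Poisson bivector takes the form $\Pi = h(y, \theta)\,\partial_y \wedge \partial_\theta$ with $h(0, \theta) = 0$ and $\partial_y h(0, \theta) = 1/f(\theta)$; the smooth correction $\beta$ perturbs $h$ only at order $y$ and so disappears on $Z$. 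A direct application of the coordinate formula $v^i = \partial_j \Pi^{ij}$ for the modular vector field (relative to the orientation volume $dy \wedge d\theta$) then yields $v|_{\gamma_r}$ proportional to $\tfrac{1}{f(\theta)}\,\partial_\theta$. Solving the flow equation for the time to traverse $\gamma_r$ once, with the normalization used in \cite{radko}, produces $\textrm{pd}(\gamma_r) = \bigl(\int_{\gamma_r} \alpha_{-1}\bigr)^{-1}$.

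The main obstacle is the sign and normalization bookkeeping in the second step. Conceptually the identification is natural -- the $\frac{dy}{y}$-term of the Laurent series of $\omega$ encodes precisely the leading-order structure of the dual Poisson bivector on $Z$, which is exactly what the modular vector field detects -- but matching the particular definition of ``modular period'' in \cite{radko} requires careful attention to orientation, the choice of volume form used to define the modular class, and the resulting direction of the flow on $\gamma_r$. Once these conventions are pinned down, the key identity reduces to the observation that reciprocal-speed integration of $\tfrac{1}{f(\theta)}\partial_\theta$ around $\gamma_r$ is, up to the relevant normalizing factor, exactly $\int_{\gamma_r} f(\theta)\,d\theta = \int_{\gamma_r}\alpha_{-1}$.
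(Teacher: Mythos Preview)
Your treatment of the Liouville volume is the paper's one-line argument, just unpacked: both observe that by the definition of the smooth part via Poincar\'e duality, pairing with $[1] \in H^0_c(M)$ gives $\int_M \omega_{\textrm{sm}} = P_{[\omega]}(0)$.

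For the modular period the approaches differ. The paper does not compute the modular vector field at all: it first asserts (implicitly via a local Moser or Radko's normal form) that near each $\gamma_r$ one may choose collar coordinates in which $\omega = c\,\tfrac{dy}{y}\wedge d\theta$ with $c$ a positive \emph{constant} and $\theta \in [0,1]/\!\sim$, then simply cites \cite{radko} for the fact that in this normal form the modular period equals $c^{-1}$, and reads off $\int_{\gamma_r}\alpha_{-1} = \int_{\gamma_r} c\,d\theta = c$. Your route---computing the modular vector field from a general Laurent representative with variable coefficient $f(\theta)$---is more self-contained and avoids invoking a normal form, but it runs directly into the obstacle you yourself flag.

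That obstacle is a real gap, not cosmetic bookkeeping. With the divergence formula you quote, the modular vector field restricted to $\gamma_r$ is $\tfrac{1}{f(\theta)}\partial_\theta$, and the time for its flow to traverse $\gamma_r$ once is $\int_{\gamma_r} f(\theta)\,d\theta = \int_{\gamma_r}\alpha_{-1}$, exactly as your last paragraph states. But this is the \emph{reciprocal} of the theorem's claimed value $(\int_{\gamma_r}\alpha_{-1})^{-1}$, and a ``normalizing factor'' cannot turn a number into its inverse. To close your argument you would have to identify precisely how Radko's notion of modular period relates to the flow-period you computed; as written, the proposal asserts one answer while computing the other. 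The paper sidesteps this entirely by quoting Radko's result as a black box rather than deriving it.
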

\begin{proof}
The fact that the Liouville volume of $\omega$ equals $\int_{M}\omega_{\textrm{sm}}$ follows from the definition of the smooth part of a $b^k$-form. Let $\gamma_i$ be a connected component of $Z$. We can find a collar neighborhood
\begin{align*}
U = \{(y, \theta), |y| < R, \theta \in [0, 1]/\sim \} \ \ \ \ \ \ \ \ R > 0
\end{align*}
such that on $U$
\[
\omega = c\frac{dy}{y} \wedge d\theta \ \ \ \ \ \ \ c > 0
\]
where $d\theta$ is a positively-oriented volume form on $Z$. From \cite{radko}, we know that the period of the modular vector field is $c^{-1}$, and we calculate that 
\[
\int_{\gamma_i} \alpha_{-1} = \int_{\gamma_i} cd\theta = c\textrm{.}
\]
\end{proof}

\begin{theorem}\label{bk_triangle} Let $\omega_0, \omega_1$ be symplectic $b^k$-forms on a compact connected $b^k$-surface $(M, Z, j_Z)$. The following are equivalent
\begin{enumerate}
\item There is a $b^k$-symplectomorphism $\varphi: (M, Z, j_Z, \omega_0) \rightarrow (M, Z, j_Z, \omega_1)$.
\item $[\omega_0] = [\omega_1]$
\item The Liouville volumes of $\omega_0$ and $\omega_1$ agree, as do the numbers 
\[
\int_{\gamma_r} \alpha_{-i}
\]
for all connected components $\gamma_r \subseteq Z$ and all $1 \leq i \leq k$, where $\alpha_{-i}$ are the terms appearing in the Laurent decomposition of the two forms.
\end{enumerate}
\end{theorem}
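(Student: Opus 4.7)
The plan is to establish (2) $\Leftrightarrow$ (3) directly from the Liouville-Laurent isomorphism, then (1) $\Rightarrow$ (3) from the naturality of pullback, and finally (2) $\Rightarrow$ (1) via the global Moser theorem. For (2) $\Leftrightarrow$ (3), the key observation is that on a connected compact oriented surface $H^2(M) \cong \mathbb{R}$ via integration over $M$, while since $Z$ is a disjoint union of oriented circles $\gamma_1, \dots, \gamma_r$ we have $H^1(Z) \cong \mathbb{R}^r$ via $[\alpha] \mapsto \bigl(\int_{\gamma_j}\alpha\bigr)_j$. The Liouville-Laurent isomorphism (Equation \ref{canonical_isomorphism}) identifies $[\omega]$ with the tuple $([\omega_{\textnormal{sm}}], [\alpha_{-1}], \dots, [\alpha_{-k}])$, and by construction of the smooth part the Liouville volume of $[\omega]$ is $\int_M \omega_{\textnormal{sm}}$. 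Hence the numerical data of (3) record exactly the information that determines the cohomology class in (2).

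For (1) $\Rightarrow$ (3) I would apply Proposition \ref{pullback_commute}: the Liouville-Laurent decomposition of $\omega_0 = \varphi^*\omega_1$ is the pullback of that of $\omega_1$, so the standard invariance of integration under the (orientation-preserving) diffeomorphism $\varphi$ yields equality of the numerical invariants in (3), after the relabeling of components of $Z$ induced by $\varphi|_Z$. Note that $\varphi|_Z$ preserves the orientation of each component it sends to itself, because $\varphi^*j_Z = j_Z$ pins down the sign of every positively oriented local defining function.

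The heart of the argument is (2) $\Rightarrow$ (1), which I would prove by applying Theorem \ref{global_moser} to the straight-line path $\omega_t = t\omega_1 + (1-t)\omega_0$. Since (2) makes $d\omega_t/dt = \omega_1 - \omega_0$ exact, the theorem will deliver the desired $b^k$-symplectomorphism as soon as $\omega_t$ is a symplectic $b^k$ 2-form, i.e.\ nowhere vanishing, for every $t \in [0,1]$. This nondegeneracy is the main obstacle. To verify it, I would use that hypothesis (2) forces $[\alpha_{-k}(\omega_0)] = [\alpha_{-k}(\omega_1)]$ in $H^1(Z)$, so $\iota_{\mathbb{L}}(\omega_0)$ and $\iota_{\mathbb{L}}(\omega_1)$ agree in sign on each circle $\gamma_r$. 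Locally near $Z$ one has $\omega_i = f_i\,(dy/y^k)\wedge d\theta$ with $f_i$ nowhere zero and $f_0, f_1$ of common sign on each $\gamma_r$; propagating this sign agreement across $M \setminus Z$ by connectedness, continuity, and the side-of-$Z$-dependent sign of $1/y^k$, one sees that the ratio $h := \omega_1/\omega_0$ (well-defined as a smooth function since $\omega_0$ is a nowhere-vanishing $b^k$ 2-form) is strictly positive on all of $M$. Consequently $\omega_t = ((1-t) + th)\omega_0$ is nowhere vanishing. In higher dimensions convex combinations of symplectic forms seldom remain symplectic, but on a surface the only obstruction is precisely this sign-matching issue, and that is exactly what the cohomological hypothesis (2) controls via its agreement of the leading Laurent term.
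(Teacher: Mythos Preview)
Your proof is correct and follows the same overall strategy as the paper: the equivalence $(2)\Leftrightarrow(3)$ comes directly from the Liouville--Laurent isomorphism, and $(2)\Rightarrow(1)$ from the global Moser theorem in dimension two. The paper disposes of $(1)\Leftrightarrow(2)$ in a single line (``follows from the global Moser Theorem in dimension 2''), whereas you unpack the one nontrivial point hidden in that phrase: that the linear path $\omega_t$ stays nondegenerate. Your argument for this---equality of the leading Laurent classes $[\alpha_{-k}]$ forces the ratio $h=\omega_1/\omega_0$ (a smooth nowhere-zero function on $M$, since $\omega_0$ is a volume $b^k$-form) to be positive along $Z$, and then sign-propagation over the components of $M\setminus Z$ gives $h>0$ everywhere---is exactly the content of the paper's ``in dimension 2'' and is worth having written down.

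One small caveat: in your $(1)\Rightarrow(3)$ step you correctly note that Proposition~\ref{pullback_commute} only gives equality of the numbers $\int_{\gamma_r}\alpha_{-i}$ \emph{after the permutation of components induced by $\varphi|_Z$}. The paper's terse proof does not address this either, and strictly speaking neither argument rules out a $b^k$-symplectomorphism that nontrivially permutes the $\gamma_r$. Since you already have the clean implications $(2)\Rightarrow(1)$ and $(2)\Leftrightarrow(3)$, the cycle closes once you observe that any $b^k$-symplectomorphism $\varphi$ of a compact connected surface acts trivially on $H^2(M)\cong\mathbb{R}$ (it is orientation-preserving because $\varphi^*\omega_1=\omega_0$ and both are volume $b^k$-forms); the possible reshuffling of the $H^1(Z)$ summands is a genuine ambiguity in how condition~(3) is read, present in the paper as well.
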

\begin{proof} \hspace{1pt}

\begin{description}
\item[$(1) \iff (2)$] This follows from the global Moser's Theorem (Theorem \ref{global_moser}) in dimension 2.
\item[$(2) \iff (3)$] The isomorphism (\ref{canonical_isomorphism}) shows that the cohomology class of a volume $b^k$ form is determined by its Liouville-Laurent decomposition, which in turn is determined by its Liouville volume and the integrals $\int_{\gamma_r} \alpha_{-i}$.
\end{description}
\end{proof}

\subsection{Integrable systems and a $b^k$ Poincar\'{e} Lemma}
We began Section \ref{sec_derham} by studying the following complex of sheaves.
\begin{equation}\label{complex_bkderham}
0 \rightarrow C^{\infty} \rightarrow {^{b^k}}\hspace{-2pt}\Omega^1 \rightarrow {^{b^k}}\hspace{-2pt}\Omega^2 \rightarrow {^{b^k}}\hspace{-2pt}\Omega^3 \rightarrow \dots
\end{equation}
Although this complex acts as a generalization of the de Rham complex of smooth forms, there is one remarkable dissimilarity between them: unlike in the smooth case, there exist $b^k$-forms that are closed but not locally exact. This is true even when $k = 1$. For example, if $y$ is a local defining function for $Z$, then $y^{-1}dy$ is a closed $b$-form, but in no neighborhood of any $p\in Z$ is it exact. Informally, it {\it wants} to be the differential of $\textrm{log}(y)$, but $\textrm{log}(y)$ is not a section of $C^{\infty}$ in any neighborhood of $p$. This ``failure of the Poincar\'{e} lemma'' plagues only $b^k$ $1$-forms, not $b^k$ forms of higher degree. Indeed, if 
\[
\omega = \sum_{i = 1}^k \frac{dy}{y^i} \wedge \pi^*(\alpha_i) + \beta
\]
is a Laurent decomposition of a $b^k$ form $\omega$ of degree $\geq 2$ in a collar neighborhood of $Z$, then for a sufficiently small neighborhood $U$ of any $p \in Z$, there exist primitives $\eta_i$ of $\restr{\alpha_i}{U \cap Z}$ and $\eta_{\beta}$ of $\restr{\beta}{U}$, and
\[
\sum_{i = 1}^k \frac{dy}{y^i} \wedge \pi^*(\eta_i) + \eta_{\beta}
\]
is a primitive of $\omega$. In light of this fact, one way to make peace with this failure of Poincar\'{e}'s lemma is to enlarge the sheaf $C^{\infty}$ into a new sheaf ${^{b^k}}\hspace{-2pt}C^{\infty}$ whose sections, in any contractible neighborhood, include primitives of all closed $b^k$ 1-forms. 
\begin{definition}
Let $(M, Z, j_Z)$ be a $b^k$-manifold. The sheaf ${^{b^k}} \hspace{-2pt}C^{\infty}$ on $M$ is defined by
\[
{^{b^k}}\hspace{-2pt}C^{\infty}(U) := \{ f \in C^{\infty}(U \backslash Z) \mid df \ \textrm{extends to an element of} \ {^{b^k}}\hspace{-2pt}\Omega^1(U)\}
\]
and the differential ${^{b^k}}\hspace{-2pt}C^{\infty} \rightarrow {^{b^k}}\hspace{-2pt}\Omega^1$ is the map that sends $f \in {^{b^k}}\hspace{-2pt}C^{\infty}(U)$ to the unique element of ${^{b^k}}\hspace{-2pt}\Omega^1(U)$ that extends $df$.
\end{definition}
After replacing $C^{\infty}$ with ${^{b^k}}\hspace{-2pt}C^{\infty}$ in the complex \ref{complex_bkderham}, every closed $b^k$ 1-form is locally exact. The question of whether $C^{\infty}(M)$ or ${^{b^k}}\hspace{-2pt}C^{\infty}(M)$ is the correct notion of ``function'' on a $b^k$-manifold is more than a superficial convention -- it raises the question of how to define an integrable system on a $b^k$-manifold. If $\omega$ is a symplectic $b^k$-form and $f \in C^{\infty}(M)$, then the symplectic gradient $X_f$ of $f$ will be a vector field whose restriction to $Z$ is tangent to the leaves of the symplectic foliation of $Z$ defined by $\textrm{ker}(\iota_{\mathbb{L}}\omega)$. On the other hand, if $f \in {^{b^k}}\hspace{-2pt}C^{\infty}(M)$, then $X_f$ will still be tangent to $Z$, but not necessarily tangent to the leaves of the symplectic foliation. This difference can also be seen in the fact that for any $p$, the map 
\begin{align}\label{eqn_surje}
{^{b^k}}\hspace{-2pt}C^{\infty}(M) &\rightarrow {^{b^k}}\hspace{-2pt}T_p(M)\\
f &\mapsto (X_f)_p\notag
\end{align}
is surjective (generalizing the surjection $C^{\infty}(M) \rightarrow T_p(M)$ from the classic theory of integrable systems), but this surjectivity would fail if ${^{b^k}}\hspace{-2pt}C^{\infty}(M)$ were replaced with $C^{\infty}(M)$. So far, the only definitions of an ``integrable system on a $b$-manifold'' that the author of this paper is aware of have taken $C^{\infty}$ functions to be the integrals of motion. It would be exciting to also study integrable systems whose integrals of motion are ${^{b^k}}\hspace{-2pt}C^{\infty}$ functions.

\section{Symplectic and Poisson structures of $b^k$-type}

When the authors of \cite{gmp2} studied the Poisson structures dual to symplectic $b$-forms, they found that $b$-symplectomorphisms are precisely Poisson isomorphisms of the dual Poisson manifolds. Unfortunately, this observation does not generalize to the $b^k$ case: although every symplectic $b^k$-form is dual to a Poisson bivector, not every Poisson isomorphism (with respect to this bivector) is realized by a $b^k$-map. Similarly, if $(M, Z, j_Z, \omega)$ and $(M, Z, j_Z', \omega')$ are two symplectic $b^k$-manifolds, there may be a diffeomorphism of $(M, Z)$ that restricts to a symplectomorphism $(M \backslash Z, \omega) \rightarrow (M \backslash Z, \omega')$ even if there is no $b^k$-symplectomorphism $(M, Z, j_Z, \omega) \rightarrow (M, Z, j_Z', \omega')$. In this section, we show how to use $b^k$-manifolds to prove statements about objects outside of the $b^k$-category. We begin by defining the notion of a Poisson (and symplectic) structure of $b^k$-type -- these are the Poisson (and symplectic) structures that are dual to (or equal to) a symplectic $b^k$-form for {\it some} choice of jet data. Then we apply the theory of symplectic $b^k$-forms to classify these structures on compact connected surfaces.

\begin{definition} Let $Z$ be an oriented hypersurface of an oriented manifold $M$. Let $\Pi$ be a Poisson structure on $M$ having full rank on $M \backslash Z$, and let $\omega \in \Omega^2(M \backslash Z)$ be the symplectic form dual to $\restr{\Pi}{M \backslash Z}$. We say that $\Pi$ and $\omega$ are {\bf of $b^k$ type} if there is some $j_Z \in J^{k-1}$ for which $(M, Z, j_Z)$ is a $b^k$-manifold on which $\omega$ extends to a symplectic $b^k$-form.
\end{definition}

\begin{remark}
Notice that if $\Pi$ is a Poisson structure of $b^k$-type on $(M^{2n}, Z)$ with dual form $\omega$, then there will be several distinct jets with respect to which $\omega$ is a symplectic $b^k$-form. For example, if $j_Z = [y]$ is one such jet and $f: \mathbb{R} \rightarrow \mathbb{R}$ satisfies $f(0) = 0$ and $f'(0) > 0$, then the jet $j_Z' := [f \circ y]$ defines exactly the same $b^k$-(co)tangent bundles as $j_Z$. As such, $\omega$ is a symplectic form with respect to both $j_Z'$ and $j_Z$. However, one can check that the condition of $\omega^n$ being positively oriented (as a volume $b^k$-form in the sense of Definition \ref{def_posoriented}) does {\it not} depend upon the chosen jet. Therefore, we say that $\Pi$ (or $\omega$) is a {\bf positively oriented} Poisson structure (or symplectic form) of $b^k$ type if $\omega$ extends to a positively oriented volume $b^k$-form for {\it any} choice of jet $j_Z$ for which $\omega$ extends to a $b^k$ form.
\end{remark}

To study Poisson and symplectic structures of $b^k$-type using the tools of $b^k$-geometry, we must understand how a $b^k$-form behaves under diffeomorphisms of $(M, Z)$ that are not necessarily $b^k$-maps. Of particular interest to us will be diffeomorphisms of $M$ that restrict to $(z, y) \mapsto (z, P(y))$ in a collar neighborhood $Z \times \mathbb{R}$ of $Z$, where $P$ is a polynomial. The following proposition describes how the Liouville-Laurent decomposition behaves under pullback of such a map (compare this proposition to Proposition \ref{pullback_commute}, where we showed that the Liouville-Laurent decomposition commutes with the pullback of a $b^k$-map).

\begin{proposition}\label{poly_change}
Let $P$ be a polynomial with $P(0) = 0$ and $P'(0) > 0$. Let $(M, Z, j_Z)$ be a $b^k$-manifold with positively oriented local defining function $y \in j_Z$, and let $\varphi: M \rightarrow M$ be a diffeomorphism given by $\textnormal{id} \times P(y)$ in a collar neighborhood $(\pi, y): U \rightarrow Z \times \mathbb{R}$ of $Z$. Then
\begin{itemize}
\item If $\omega$ is a $b^k$-form, then $\varphi^{*}(\omega)$ is also a $b^k$-form on $(M, Z, j_Z)$.
\item If $[\omega]$ has Liouville-Laurent decomposition $([\omega_{\textrm{sm}}], [\alpha_{-1}], \dots, [\alpha_{-k}])$ and $[\varphi^*(\omega)]$ has Laurent decomposition $([\omega_{\textrm{sm}}'], [\alpha_{-1}'], \dots, [\alpha_{-k}'])$, then $[\varphi^*(\omega_{\textrm{sm}})] = [\omega_{\textrm{sm}}']$ and $[\alpha_{-1}] = [\alpha_{-1}']$. 
\end{itemize}
\end{proposition}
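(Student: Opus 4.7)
The plan is to work in a collar neighborhood $(\pi, y)\colon U \to Z \times \mathbb{R}$ and exploit the factorization $P(y) = yQ(y)$, where $Q(0) = P'(0) > 0$, so $Q$ is smooth and nonvanishing near $Z$. Since $\varphi(z, y) = (z, P(y))$, we have $\varphi|_Z = \text{id}$ and $\pi \circ \varphi = \pi$, so pulling back a Laurent series $\omega = \sum_{i=1}^{k} \frac{dy}{y^i}\wedge \pi^*(\alpha_{-i}) + \beta$ yields $\varphi^*(\omega) = \sum_{i=1}^{k} \frac{d(P(y))}{P(y)^i}\wedge \pi^*(\alpha_{-i}) + \varphi^*(\beta)$. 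For the first bullet, I compute $\frac{d(P(y))}{P(y)^i} = \frac{P'(y)}{Q(y)^i}\cdot\frac{dy}{y^i}$ where the coefficient $P'(y)/Q(y)^i$ is smooth near $Z$; each term is therefore a smooth multiple of $\frac{dy}{y^i}$, hence a $b^k$-form on $(M, Z, j_Z)$. Combined with smoothness of $\varphi^*(\omega)$ on $M \setminus Z$, this proves $\varphi^*(\omega)$ is a $b^k$-form.

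For the equality $[\alpha_{-1}'] = [\alpha_{-1}]$, the key observation is that for $i \geq 2$,
\[
\frac{d(P(y))}{P(y)^i} = d\left(-\frac{P(y)^{1-i}}{i-1}\right)
\]
is exact; expanding $P(y)^{1-i} = y^{1-i}Q(y)^{1-i}$ via the Taylor series of the smooth function $Q^{1-i}$ and differentiating shows that every singular term of this 1-form is of the shape $\frac{dy}{y^m}$ with $m \geq 2$, so no $\frac{dy}{y}$ term appears. For $i=1$, $\frac{d(P(y))}{P(y)} = \frac{dy}{y} + d(\log Q(y))$ splits as $\frac{dy}{y}$ plus a smooth 1-form. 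Assembling the pieces, a Laurent series for $\varphi^*(\omega)$ has its $\frac{dy}{y}$-coefficient equal to $\pi^*(\alpha_{-1})$ modulo smooth pieces absorbed into $\beta'$, so $[\alpha_{-1}'] = [i_Z^*(\pi^*(\alpha_{-1}))] = [\alpha_{-1}]$.

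For the smooth-part equality, I would use the defining property: for every $[\eta] \in H^{n-p}_c(M)$, $\int_M [\omega_{\text{sm}}'] \wedge \eta = P_{[\varphi^*(\omega)\wedge \eta]}(0)$. The diffeomorphism change of variables yields
\[
\int_{M\setminus U_{y,\epsilon}} \varphi^*(\omega \wedge \eta) = \int_{M\setminus U_{y, (-\epsilon^-,\epsilon^+)}} \omega \wedge (\varphi^{-1})^*\eta,
\]
where $\epsilon^- := -P(-\epsilon)$ and $\epsilon^+ := P(\epsilon)$. Computing the right-hand integral via a Laurent series of $\omega \wedge (\varphi^{-1})^*\eta$ (mimicking the proof of Theorem \ref{thm_volumepolynomial}) reduces the problem to understanding the $\epsilon \to 0$ asymptotics of these asymmetric-cutoff integrals. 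The technical heart of the argument, and the main obstacle, is a parity computation: expanding $\epsilon^\pm$ as power series in $\epsilon$ gives $\epsilon^\mp(-\epsilon) = -\epsilon^\pm(\epsilon)$, which forces $(\epsilon^-)^{1-i} - (\epsilon^+)^{1-i}$ (for $i$ odd $\geq 3$) and $(\epsilon^-)^{1-i} + (\epsilon^+)^{1-i}$ (for $i$ even) to be odd power series in $\epsilon$, and the log term $\log(\epsilon^-/\epsilon^+) = O(\epsilon)$. None of these contribute to the $\epsilon^0$-coefficient of the integral, so the constant term matches the one obtained from symmetric cutoffs. This gives $P_{[\varphi^*(\omega)\wedge\eta]}(0) = P_{[\omega \wedge (\varphi^{-1})^*\eta]}(0) = \int_M \varphi^*[\omega_{\text{sm}}]\wedge \eta$, and by Poincar\'{e} duality $[\omega_{\text{sm}}'] = [\varphi^*(\omega_{\text{sm}})]$.
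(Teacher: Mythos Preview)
Your proof is correct and follows essentially the same approach as the paper: factor $P(y)=yQ(y)$ to see each $P'(y)\,dy/P(y)^i$ is a smooth multiple of $dy/y^i$, use exactness of $P'\,dy/P^i$ for $i\geq 2$ to kill residues, and compare Liouville volumes via the asymmetric cutoff $(P(-\epsilon),P(\epsilon))$ by an odd-in-$\epsilon$ parity argument. One small slip: in your displayed change-of-variables formula the left side should read $\varphi^{*}(\omega)\wedge\eta$ (equivalently $\varphi^{*}(\omega\wedge(\varphi^{-1})^{*}\eta)$), not $\varphi^{*}(\omega\wedge\eta)$; your right side and subsequent reasoning are consistent with the corrected version.
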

\begin{proof}
In a collar neighborhood, let
\[ \omega = \sum_{i = 1}^{k} \frac{dy}{y^i} \wedge \pi^{*}(\alpha_{-i}) + \beta \]
be a Laurent decomposition of $\omega$. Then
\begin{equation}\label{expand_laurent} \varphi^{*}(\omega) = \sum_{i = 1}^{k} \frac{P'(y)dy}{P(y)^i} \wedge \pi^{*}(\alpha_{-i}) + \varphi^*\beta. \end{equation}
Notice that each term $\frac{P'(y)}{P(y)^i}$ must have a Laurent series with no exponents less than $-i$: indeed, 
\[
y^i\frac{P'(y)}{P(y)^i} = \left(\frac{y}{P(y)}\right)^iP'(y)
\]
is smooth. By replacing each $\frac{P'(y)}{P(y)^i}$ in equation \ref{expand_laurent} with its Laurent series, this proves the first claim. To prove the second claim, first observe that for $i \neq 1$, 
\[
\frac{P'(y)dy}{P(y)^i} = d\left( \frac{1}{-i + 1} P(y)^{-i+1} \right)
\]
so the meromorphic function $P'(y)P(y)^{-i}$ has no residue. For $i = 1$ the function $P'(y)P(y)^{-1}$ has a Laurent series with principal part $1/y$. Therefore, by replacing the $P'(y)P(y)^{-i}$
terms in Equation \ref{expand_laurent} with their Laurent series in the variable $y$, we arrive at a Laurent series of $\varphi^*(\omega)$ that has $y^{-1}dy\wedge \pi^*(\alpha_{-1})$ as its residue term, proving that $[\alpha_{-1}] = [\alpha_{-1}']$. To prove that $[\varphi^*(\omega_{\textrm{sm}})] = [\omega_{\textrm{sm}}']$, let $[\eta] \in {^{b^k}}\hspace{-2pt}H^{n-p}_c(M)$, where $p$ is the degree of $\omega$ and $n = \textrm{dim}(M)$. It suffices to show that 
\begin{equation}\label{eqn_toprove}
P_{[\varphi^*(\omega) \wedge \eta]}(0) = \int_{M} \varphi^*(\omega_{\textrm{sm}}) \wedge \eta \textrm{.}
\end{equation}
Towards this goal, observe that for $\epsilon > 0$ small, $\varphi(U_{y, \epsilon}) = U_{y, P(\epsilon)}$, so $\textnormal{vol}_{y, \epsilon}(\varphi^*(\omega \wedge (\varphi^{-1})^{*}\eta)) = \textnormal{vol}_{y, P(\epsilon)}(\omega \wedge (\varphi^{-1})^{*}\eta))$. Then letting 
\[
\omega \wedge (\varphi^{-1})^*\eta = \sum_{i = 1}^k \frac{dy}{y} \wedge \pi^*(\widetilde{\alpha}_{-i}) + \widetilde{\beta}
\]
be a Laurent series of $\omega \wedge (\varphi^{-1})^*\eta$, 
\begin{align*}
\textnormal{vol}_{y, \epsilon}(\varphi^*(\omega) \wedge \eta) - \textnormal{vol}_{y, \epsilon}(\omega \wedge (\varphi^{-1})^*\eta) 
&= \left( \int_{M \backslash U_{y, P(\epsilon)}} - \int_{M \backslash U_{y, \epsilon}} \right) \omega \wedge (\varphi^{-1})^*\eta\\
&= \int_Z \left( \int_{P(\epsilon)}^\epsilon - \int_{P(-\epsilon)}^{-\epsilon} \right)\sum_{i = 1}^k \frac{dy}{y^i}\pi^*(\widetilde{\alpha}_{-i}) \\
& \ \ \ \ \ \ \ \ \  +  \left( \int_{M \backslash U_{y, P(\epsilon)}} - \int_{M \backslash U_{y, \epsilon}} \right) \widetilde{\beta}\\
\end{align*}
As $\epsilon \rightarrow 0$, this limit approaches an odd function of $\epsilon$, proving that $P_{[\varphi^*(\omega) \wedge \eta]}(0) = P_{[\omega \wedge (\varphi^{-1})^*\eta]}(0)$, from which Equation \ref{eqn_toprove} follows.
\end{proof}

\begin{lemma}\label{poly_pick}
Let $(a_{-1}, \dots, a_{-k}) \in \mathbb{R}^k$ with $a_{-k} > 0$. There is a polynomial $P = \sum p_iy^i$ with $p_0 = 0$ and $p_1 > 0$ satisfying
\[
\sum_{i = 1}^{k} a_{-i} \frac{P'(y)}{P(y)^{i}} = \frac{1}{y^k} + \frac{a_{-1}}{y} + Q(y)
\]
where $Q(y)$ is a polynomial.
\end{lemma}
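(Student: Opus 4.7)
The plan is to solve for the Taylor coefficients of $P$ one at a time, matching the principal part of the Laurent series of $\sum_{i=1}^k a_{-i}P'/P^i$ at $y=0$ via a triangular system. Throughout I interpret the asserted equality as an identity of Laurent germs at $y=0$, with $Q(y)$ the analytic remainder (which may be truncated to a polynomial if desired). The case $k=1$ is degenerate -- $P'/P$ always has residue $1$, so the equation would force $a_{-1} = 1+a_{-1}$ -- and does not arise in the paper's application, so I assume $k\ge 2$.

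Write $P(y) = p_1 y + p_2 y^2 + \dots + p_{k-1} y^{k-1}$ and let $c_j$ denote the coefficient of $y^{-j}$ in $\sum_{i=1}^k a_{-i}P'/P^i$. The residue condition $c_1 = a_{-1}$ holds automatically: for $i \ge 2$, the identity $P'/P^i = -\tfrac{1}{i-1}\tfrac{d}{dy}(P^{1-i})$ exhibits $P'/P^i$ as the derivative of a Laurent series, so it has zero residue, while $\operatorname{res}_0(P'/P) = \operatorname{ord}_0(P) = 1$. It remains to enforce $c_k = 1$ and $c_j = 0$ for $2 \le j \le k-1$ -- a total of $k-1$ equations in the $k-1$ unknowns $p_1,\dots,p_{k-1}$.

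Writing $P = p_1 y\bigl(1 + r(y)\bigr)$ with $r(y) = \sum_{m\ge 1}(p_{m+1}/p_1)y^m$ and expanding $(1+r)^{-i}$ via the binomial series, one checks that $[y^{-j}]P'/P^i$ depends only on $p_1,\dots,p_{i-j+1}$, so $c_j$ depends only on $p_1,\dots,p_{k-j+1}$. The dependence on the top variable $p_{k-j+1}$ comes solely from the $i=k$ term, and using $P'/P^k = -\tfrac{1}{k-1}\tfrac{d}{dy}(P^{-(k-1)})$ one obtains
\[
\frac{\partial c_j}{\partial p_{k-j+1}} = -\frac{(j-1)\,a_{-k}}{p_1^{k}}\qquad(2 \le j \le k-1),
\]
which is nonzero. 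This nonvanishing is the main technical point; it reduces to the combinatorial observation that in $(1+r)^{-(k-1)}$ the only contribution to $[y^{k-j}]$ that involves $p_{k-j+1}$ is the linear term of the binomial expansion, since any $r^n$ with $n\ge 2$ contributing to $y^{k-j}$ uses only $p_m$ with $m\le k-j$.

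Thus the system is triangular with nonzero diagonal. Solving $c_k = a_{-k}/p_1^{k-1} = 1$ gives $p_1 = a_{-k}^{1/(k-1)} > 0$; then $c_{k-1}=0$ is linear in $p_2$ with nonzero coefficient and determines $p_2$; continuing, $c_{k-j+1}=0$ determines $p_j$ for $j = 3,\dots,k-1$, yielding the desired $P$.
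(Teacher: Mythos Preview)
Your proof is correct and follows essentially the same strategy as the paper: both observe that the residue term $a_{-1}/y$ is automatic (since $P'/P^i$ is exact for $i\ge 2$) and then solve for $P$ by an induction that kills the unwanted principal-part coefficients one at a time. The only cosmetic difference is that the paper packages the inductive step as a substitution $P \mapsto P + tP^{j+1}$ rather than solving directly for the Taylor coefficients $p_1,\dots,p_{k-1}$; your triangular-system formulation is equivalent and arguably cleaner.
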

\begin{proof}
The proof is technical. See Section \ref{yucky_section} for the details.
\end{proof}
The two results above are the ingredients we need to prove the main theorem of this section.
\begin{theorem} Let $Z$ be an oriented hypersurface of a compact oriented surface $M$. Let $\Pi, \Pi'$ be two positively oriented Poisson structures of $b^k$-type on $(M, Z)$, and $\omega, \omega'$ be the dual $b^k$-symplectic forms (with respect to possibly different $b^k$-structures) with Liouville-Laurent decompositions
\begin{align*}
[\omega] &= ([\omega_{\textrm{sm}}], [\alpha_{-1}], \dots, [\alpha_{-k}])\\
[\omega'] &= ([\omega^{\prime}_{\textrm{sm}}], [\alpha^{\prime}_{-1}], \dots, [\alpha^{\prime}_{-k}]).
\end{align*}
If $[\omega^{\prime}_{\textrm{sm}}] = [\omega_{\textrm{sm}}] \in H^2(M)$ and $[\alpha^{\prime}_{-1}] = [\alpha_{-1}] \in H^1(Z)$, then there is a Poisson isomorphism  $\varphi: (M, \Pi) \rightarrow (M, \Pi')$.
\end{theorem}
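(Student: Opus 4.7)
The plan is to reduce the problem, via a composition of three diffeomorphisms of $(M,Z)$, to an application of the classification Theorem \ref{bk_triangle}. Write $Z = \bigsqcup_{r} \gamma_{r}$ as a union of oriented circles. The three diffeomorphisms will (1) convert $\omega'$ into a $b^{k}$-form on the same $b^{k}$-manifold as $\omega$; (2) by polynomial pullbacks in collars of the $\gamma_{r}$, bring both $\omega$ and the modified $\omega'$ to a common canonical Laurent form; and (3) realize the final identification via a $b^{k}$-symplectomorphism.

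\emph{Stage 1 (unify the jet data).} In a collar of $Z$ pick local defining functions $y \in j_{Z}$ and $y' \in j_{Z'}$; then $y' = h(y,\theta)$ for some smooth $h$ with $h(0,\theta)=0$ and $\partial_{y}h(0,\theta)>0$. Solving the Taylor expansion order by order, there is a $\theta$-dependent polynomial $\psi(y,\theta)$ of degree less than $k$ satisfying $h(\psi(y,\theta),\theta) \equiv y \pmod{y^{k}}$. Extending smoothly via a cutoff, the map $\varphi_{1}(y,\theta) := (\psi(y,\theta),\theta)$ defines a diffeomorphism of $M$ that fixes $Z$ pointwise, is isotopic to the identity, and is a $b^{k}$-map $(M,Z,j_{Z}) \to (M,Z,j_{Z'})$. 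By Proposition \ref{pullback_commute}, $\omega_{1} := \varphi_{1}^{*}\omega'$ is a $b^{k}$-form on $(M,Z,j_{Z})$ whose Liouville--Laurent decomposition is $([\omega_{\mathrm{sm}}],[\alpha_{-1}],[\alpha'_{-2}],\ldots,[\alpha'_{-k}])$, matching that of $\omega$ in the smooth part and in the residue by hypothesis.

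\emph{Stage 2 (canonicalize the higher Laurent terms).} Set $A^{r}_{-i} := \int_{\gamma_{r}}\alpha_{-i}$ and $B^{r}_{-i} := \int_{\gamma_{r}}\alpha'_{-i}$; positive orientation forces $A^{r}_{-k}, B^{r}_{-k}>0$. For each $r$, Lemma \ref{poly_pick} supplies polynomials $P_{r}$ and $P'_{r}$ (from the tuples $(A^{r}_{-1},\ldots,A^{r}_{-k})$ and $(B^{r}_{-1},\ldots,B^{r}_{-k})$ respectively). Let $\varphi_{2}$, resp.\ $\varphi'_{2}$, equal $\mathrm{id}\times P_{r}$, resp.\ $\mathrm{id}\times P'_{r}$, in a small collar of each $\gamma_{r}$ and the identity elsewhere. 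These are diffeomorphisms of $M$ isotopic to the identity; by Proposition \ref{poly_change} both preserve the smooth part and the residue cohomology class. A direct expansion of $\varphi^{*}\bigl(\tfrac{dy}{y^{i}}\wedge\pi^{*}\alpha_{-i}\bigr) = \tfrac{P'(y)\,dy}{P(y)^{i}}\wedge\pi^{*}\alpha_{-i}$ shows that the Laurent integrals over $\gamma_{r}$ transform by exactly the linear operator Lemma \ref{poly_pick} is designed to invert. Consequently the canonical forms $\omega^{\mathrm{can}} := \varphi_{2}^{*}\omega$ and $\omega_{1}^{\mathrm{can}} := (\varphi'_{2})^{*}\omega_{1}$ each acquire Laurent integrals $(A^{r}_{-1},0,\ldots,0,1)$ at $\gamma_{r}$ (using $A^{r}_{-1}=B^{r}_{-1}$), while the preserved smooth parts give the same Liouville volume. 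Thus $\omega^{\mathrm{can}}$ and $\omega_{1}^{\mathrm{can}}$ have identical Liouville--Laurent decompositions.

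\emph{Stage 3 and main obstacle.} By Theorem \ref{bk_triangle} there is a $b^{k}$-symplectomorphism $\psi$ with $\psi^{*}\omega_{1}^{\mathrm{can}} = \omega^{\mathrm{can}}$, and then $\varphi := \varphi_{1}\circ\varphi'_{2}\circ\psi\circ\varphi_{2}^{-1}$ satisfies $\varphi^{*}\omega' = \omega$ on $M\setminus Z$, furnishing the required Poisson isomorphism. The main obstacle is justifying Stage 2: Lemma \ref{poly_pick} is stated for scalars, while we must equalize the full cohomology classes $[\alpha_{-i}] \in H^{1}(Z)$. This works precisely because each $\gamma_{r}$ is a circle, so $H^{1}(\gamma_{r}) \cong \mathbb{R}$ is detected by a single integral, and because disjoint collars of the $\gamma_{r}$ permit independent choices of polynomial per component. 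The remaining care is to verify that the three diffeomorphisms can be globalized by cutoffs while remaining isotopic to the identity (so their action on $H^{*}(M)$ and $H^{*}(Z)$ is trivial, ensuring no interference with the smooth part or residue), which is routine since each is already the identity outside a small collar of $Z$.
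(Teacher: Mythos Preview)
Your proposal is correct and follows essentially the same three-stage architecture as the paper's proof: first unify the jet data via a $b^k$-diffeomorphism (the paper does this with coordinate charts, you via an explicit Taylor-expansion construction of $\psi$), then use the polynomial pullbacks from Lemma~\ref{poly_pick} componentwise on $Z$ to force both forms into the canonical Liouville--Laurent shape $([\omega_{\mathrm{sm}}],[\alpha_{-1}],0,\dots,0,[d\theta])$, and finally invoke global Moser (Theorem~\ref{bk_triangle}). The only notable difference is that the paper inserts an extra Moser step before applying Lemma~\ref{poly_pick} to reduce the $\alpha_{-i}$ to constant multiples of $d\theta_\ell$, whereas you bypass this by computing directly that the Laurent integrals $\int_{\gamma_r}\alpha_{-i}$ transform under $\mathrm{id}\times P$ exactly by the scalar relation in Lemma~\ref{poly_pick}; this is a mild streamlining but not a different idea.
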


\begin{proof}
Let $j_Z$ and $j'_Z$ be the jets of $Z$ with respect to which $\omega$ and $\omega'$ respectively are $b^k$-forms with the described Liouville-Laurent decompositions, and let $y \in j_Z, y' \in j'_Z$ be positively oriented local defining functions for $Z$. Let $\{\gamma_{\ell}\}$ be the oriented circles that constitute the connected components of $Z$. If 
\begin{align*}
&\varphi: U_{\ell} \rightarrow \mathbb{R} \times \mathbb{S}^1 = \{(y, \theta)\}\\
&\varphi': U_{\ell} \rightarrow \mathbb{R} \times \mathbb{S}^1 = \{(y', \theta)\}
\end{align*}
are local coordinate charts for a collar neighborhood $U_{\ell}$ of $\gamma_{\ell}$, then the map $(\varphi')^{-1} \circ \varphi$ is an orientation-preserving map in a neighborhood of $\gamma_i$, restricts to the identity on $\gamma_i$, and pulls $j_{Z}'$ back to $j_Z$. As such, the collection of these maps (one for each $\gamma_{\ell} \subseteq Z$) defines a smooth map in a neighborhood of $Z$ that extends to a $b^k$-diffeomorphism $(M, Z, j_Z) \rightarrow (M, Z, j'_Z)$. By replacing $\omega'$ with its pullback under this $b^k$-diffeomorphism and citing Proposition \ref{pullback_commute}, we may assume that $\omega, \omega'$ are $b^k$-symplectic forms on the {\it same} $b^k$-manifold $(M, Z, j_Z)$, and that the Liouville-Laurent decomposititons of $\omega, \omega'$ with respect to this $b^k$ structure are as described in the theorem statement.

Let $\pi: U_{\ell} = \{(y, \theta_{\ell})\} \rightarrow S^1$ be projetion onto the second coordinate. We may assume (by the global Moser's theorem) that
\[
\restr{\omega}{U_{\ell}} = \sum_{i = 1}^{k} \frac{dy}{y^i} \wedge a_i\pi^*(d\theta_{\ell}) + \beta_0
\]
where $a_i \in \mathbb{R}$ and $a_{-k} > 0$ (because $\Pi, \Pi'$ are positively oriented). Then we may apply Lemma \ref{poly_pick} to choose a polynomial $P_{\ell} = \sum p_iy^i$ with $p_0 = 0, p_1 > 0$ satisfying
\[
\sum_{i = 1}^{k} a_{-i} \frac{P'(y)}{P(y)^{i}} = \frac{1}{y^k} + \frac{a_{-1}}{y} + Q_{\ell}(y)
\]
for some polynomial $Q_{\ell}(y)$. By replacing $\omega$ with its pullback under a diffeomorphism of $(M, Z)$ that is of the form $(y, \theta_{\ell}) \mapsto (P_{\ell}(y), \theta_{\ell})$ in each $U_{\ell}$, we may assume $[\omega]$ has Liouville-Laurent decomposition 
\[
([\omega_{\textrm{sm}}], [\alpha_{-1}], 0, \dots, 0, [d\theta])
\]
where $d\theta$ is the form on $Z$ that restricts to $d\theta_i$ on each $\gamma_i$. Similarly, we may replace $\omega'$ with a form also having this Liouville-Laurent decomposition. Finally, we apply the global Moser's theorem (Theorem \ref{global_moser}) and the fact that $M$ is a surface to complete the proof.
\end{proof}

\section{Proof of Technical Results}\label{yucky_section}

\subsection{Proof of Proposition \ref{f1f2}}

\begin{proof} The case $k = 1$ was covered in \cite{gmp2}, so we may assume $k \geq 2$. Because $[f_1]^{k-1} = [f_2]^{k-1}$, we have $f_1 = f_2(1 + gf_2^{k-1})$ for a smooth function $g$. Note that $(1+gf_2^{k-1})^{-1} = (1 + g'f_2^{k-1})$ for $g' = -g(1 + gf_2^{k-1})^{-1}$. Then
\begin{align*} 
\frac{df_1}{f_1^k} &= \frac{df_2}{f_2^k(1+gf_2^{k-1})^{k-1}} + \frac{d(1 + gf_2^{k-1})}{f_2^{k-1}(1 + gf_2^{k-1})^k}\\
&= (1 + g'f_2^{k-1})^{k-1}\frac{df_2}{f_2^k} +  \frac{(k-1)gdf_2}{(1 + gf_2^{k-1})^kf_2}  + \beta' \\
&= (1 + (k-1)g'f_2^{k-1})\frac{df_2}{f_2^k} +  \frac{(k-1)gdf_2}{(1 + gf_2^{k-1})^kf_2}  + \beta'' \\
&= \frac{df_2}{f_2^k} + (k-1)g(-(1 + gf_2^{k-1})^{-1} + (1 + g'f_2^{k-1})^k)\frac{df_2}{f_2} +  \beta'' \\
&= \frac{df_2}{f_2^k} + (k-1)g(-(1 + g'f_2^{k-1}) + (1 + g'f_2^{k-1})^k)\frac{df_2}{f_2} +  \beta'' \\
&= \frac{df_2}{f_2^k} + \beta \\
\end{align*}
where 
\begin{align*}
\beta' &= (1 + g'f_2^{k-1})^{-k}dg\\
\beta'' &= \beta' + \sum_{i = 2}^{k-1} \binom{k-1}{i}(g'f_2^{k-1})^i \frac{df_2}{f_2^k}\\
\beta &= \beta'' + (k-1)g(-g'f_2^{k-1} + \sum_{i = 1}^k \binom{k}{i} (g'f_{2}^{k-1})^i)\frac{df_2}{f_2}
\end{align*}
\end{proof}

\subsection{Proof of subclaim of Theorem \ref{thm_volumepolynomial}}

We begin the proof with two technical lemmas.
\begin{lemma}
If $f(x): \mathbb{R} \rightarrow \mathbb{R}$ satisfies $[f]_0^{k-1} = [x]_0^{k-1}$, then its inverse $h: (-\epsilon, \epsilon) \rightarrow \mathbb{R}$ satisfies $[h]_0^{k-1} = [x]_0^{k-1}$.
\end{lemma}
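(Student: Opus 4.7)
The plan is to use the inverse function theorem together with the identity $f \circ h = \mathrm{id}$ to push the order-$k$ vanishing of $f(x) - x$ over to $h(y) - y$.

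First I would unpack the hypothesis: $[f]_0^{k-1} = [x]_0^{k-1}$ means $f(x) - x$ vanishes to order $k$ at $0$, so we can write $f(x) = x + x^{k} g(x)$ for some smooth $g$. In particular $f(0) = 0$ and $f'(0) = 1$ (the case $k = 1$ is essentially trivial since then the hypothesis only says $f(0) = 0$ and the conclusion only says $h(0) = 0$, which is automatic). Hence by the inverse function theorem a smooth inverse $h$ exists on some neighborhood $(-\epsilon, \epsilon)$ of $0$, with $h(0) = 0$ and $h'(0) = 1$, so in particular $h(y) = O(y)$ near $0$.

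Next I would substitute: since $f(h(y)) = y$ for $y \in (-\epsilon, \epsilon)$, the relation $f(x) = x + x^{k}g(x)$ gives
\[
h(y) + h(y)^{k}\, g(h(y)) = y,
\]
so
\[
h(y) - y = -h(y)^{k}\, g(h(y)).
\]
Because $h(y) = O(y)$ and $g \circ h$ is smooth (hence bounded) in a neighborhood of $0$, the right-hand side is $O(y^{k})$. Thus $h(y) - y$ vanishes to order $k$ at $0$, i.e.\ $[h]_0^{k-1} = [x]_0^{k-1}$, as desired.

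There is no real obstacle here; the only point worth a moment's care is invoking the inverse function theorem, which requires $f'(0) = 1 \neq 0$, and this is automatic once $k \geq 2$. For $k = 1$ the conclusion is vacuous (the $0$-jet of any function vanishing at $0$ equals $[x]_0^{0}$), so the proof can simply assume $k \geq 2$.
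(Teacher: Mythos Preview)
Your proof is correct and follows essentially the same argument as the paper: write $f(x) = x + x^{k}g(x)$, substitute into $f(h(y)) = y$ to obtain $h(y) - y = -h(y)^{k}g(h(y))$, and use $h(y) = O(y)$ to conclude the right side vanishes to order $k$. Your version is slightly more detailed in justifying the existence of $h$ via the inverse function theorem and in separating out the trivial $k=1$ case, but the core idea is identical.
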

\begin{proof} Because $[f]^{k-1} = [x]^{k-1}$, $f = (x + g(x)x^{k})$ for some smooth $g$. Then
\[
x = f(h(x)) = h(x) + g(h(x))h(x)^k\textrm{.}
\]
Because $h(x)$ vanishes at 0 (since $f$ does), $x - h(x)$ vanishes to order at least $k$, so $[h]^{k-1} = [x]^{k-1}$.
\end{proof}
\begin{lemma} Let $f: \mathbb{R} \rightarrow \mathbb{R}$ satisfy $[f]_0^{k-1} = [x]_0^{k-1}$. Then for all $i \leq k-1$
\begin{equation}\label{4term}
\frac{1}{x^i} - \frac{1}{(-x)^i} + \frac{1}{f(-x)^i} - \frac{1}{f(x)^i}
\end{equation}
is a smooth function that vanishes at $0$.
\end{lemma}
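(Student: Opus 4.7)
The plan is to show that the singular parts of the four terms conveniently cancel in pairs once we write $f(x) = x + g(x)x^k$ for a smooth $g$ (which is possible by the hypothesis $[f]_0^{k-1} = [x]_0^{k-1}$). Concretely, I will group the four terms as
\[
\left(\frac{1}{f(-x)^i} - \frac{1}{(-x)^i}\right) - \left(\frac{1}{f(x)^i} - \frac{1}{x^i}\right)
\]
and argue that each parenthesized piece is individually smooth across $0$.

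First I would set $G(x) := \dfrac{1}{f(x)^i} - \dfrac{1}{x^i}$ and manipulate it as
\[
G(x) = \frac{1}{x^i}\bigl((1 + g(x)x^{k-1})^{-i} - 1\bigr).
\]
Since $u\mapsto (1+u)^{-i} - 1$ is analytic at $u=0$ and vanishes there, it factors as $u\cdot\phi(u)$ for a smooth $\phi$. Substituting $u = g(x)x^{k-1}$ gives
\[
G(x) = x^{k-1-i}\, g(x)\,\phi\!\bigl(g(x)x^{k-1}\bigr),
\]
and since $i \leq k-1$, the exponent $k-1-i$ is non-negative, so $G$ extends to a smooth function on a neighborhood of $0$.

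Next, observe that the four-term expression in (\ref{4term}) is exactly $G(-x) - G(x)$, hence is smooth, and it evaluates at $x=0$ to $G(0) - G(0) = 0$, giving both conclusions at once.

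The proof should be essentially book-keeping; the only subtlety is justifying the factorization $(1+u)^{-i} - 1 = u\phi(u)$ with $\phi$ smooth (which one can do either by appealing to analyticity on a small disk, or more elementarily by observing $(1+u)^{-i} - 1 = -\sum_{j=1}^{\infty}\binom{-i}{j}(-u)^j \cdot (-1)$ converges for $|u|<1$ and clearly has $u$ as a factor with smooth quotient). No real obstacle is anticipated beyond this.
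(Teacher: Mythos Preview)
Your proposal is correct and essentially identical to the paper's proof: the paper defines $h(x) = \frac{1}{x^i} - \frac{1}{f(x)^i}$ (your $-G$), shows it is smooth near $0$ by writing $f(x) = x(1+gx^{k-1})$ and expanding $(1+gx^{k-1})^i - 1$ via the binomial theorem (rather than factoring $(1+u)^{-i}-1$ as you do), and then observes the four-term expression is $h(x) - h(-x)$, a smooth odd function. The only cosmetic difference is that you factor the negative power directly while the paper clears denominators first.
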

\begin{proof} Because $[f]^{k-1} = [x]^{k-1}$, $f(x) = x(1+gx^{k-1})$ for some smooth $g$. Then 
\[
h(x) := \frac{1}{x^i} - \frac{1}{f(x)^i} = \frac{(1 + gx^{k-1})^i - 1}{x^i(1 + gx^{k-1})^i} = \frac{(\sum_{j = 1}^{i}  \binom{i}{j} g^jx^{j(k - 1) - i} ) }{(1 + gx^{k-1})^i}
\]
is a smooth function. Equation \ref{4term} equals $h(x) - h(-x)$, so it is a smooth odd function, hence vanishes at zero.
\end{proof}

\begin{proof} (of subclaim of Theorem \ref{thm_volumepolynomial})
Let $U$ be a tubular neighborhood $(y, \pi): U \rightarrow [-R, R] \times Z$, with $y \in j_Z$. Let $h$ be another element of $j_Z$. It suffices to show that 
\[
\lim_{\epsilon \rightarrow 0} \left( \textnormal{vol}_{h, \epsilon}(\omega) - \textnormal{vol}_{y, \epsilon}(\omega)\right) = 0
\]
for the case $M = U$. To do so, let $y_{h, z}: \mathbb{R} \rightarrow \mathbb{R}$ be the function, defined near zero, inverse to $\restr{h}{ [-R, R] \times \{z\}}$. That is, for sufficiently small $\epsilon$, $h(y_{h, z}(\epsilon), z) = \epsilon$ and
\[
U_{h, \epsilon} = \{(y, z) \in [-R, R] \times Z \mid y_{h, z}(-\epsilon) \leq y \leq y_{h, z}(\epsilon)\}
\]
Then
\begin{align*}
 \textrm{vol}_{h, \epsilon}(\omega) &- \textrm{vol}_{y, \epsilon}(\omega) = \left(\int_{U \backslash U_{h, \epsilon}}  - \int_{U \backslash U_{y, \epsilon}} \right) \omega\\
&= \int_{Z}\left(\int_{y_{h, z}(\epsilon)}^R + \int_{-R}^{y_{h, z}(-\epsilon)} -\int_{\epsilon}^{R} - \int_{-R}^{-\epsilon}\right) \sum_{i = 1}^{k}\frac{dy}{y^i} \pi^{*}(\alpha_{-i})\\
& \ \ \ + \left(\int_{U \backslash U_{h, \epsilon}}  - \int_{U \backslash U_{y, \epsilon}} \right) \beta \\
&= \int_{Z}\left(\log\left|\frac{y_{h, z}(-\epsilon)}{y_{h, z}(\epsilon)}\right| + \log\left|\frac{\epsilon}{-\epsilon}\right| \right)\pi^*(\alpha_{-1}) + \\
& \ \ \ + \sum_{i = 2}^{k} \frac{1}{1-i} \int_Z \left( -(y_{h, z}(\epsilon))^{1-i} + (y_{h, z}(-\epsilon))^{1-i} + (\epsilon)^{1-i} - (-\epsilon)^{1-i} \right)\pi^*(\alpha_{-i})\\
& \ \ \ + \int_{U_{y, \epsilon}} \beta - \int_{U_{u, \epsilon}}\beta
\end{align*}
by the previous lemmas, the limit as $\epsilon \rightarrow 0$ of the above expression is zero, proving the claim.
\end{proof}
\subsection{Proof of Lemma \ref{poly_pick}}
\begin{proof}
Recall from the proof of Proposition \ref{poly_change} that for any polynomial $P$ and $i \neq 1$, the expression
\[
\frac{P'(y)}{P(y)^{i}}
\]
has a Laurent series in $y$ with trivial residue term and no exponents less than $-i$. When $i = 1$, the same expression has principal part $y^{-1}$. Therefore, for any polynomial $P$,
\begin{equation}\label{eqn_bees}
\sum_{i = 1}^{k} a_{-i} \frac{P'(y)}{P(y)^{i}} = \sum_{i = 2}^{k} \frac{b_{-i}}{y^{i}} + \frac{a_{-1}}{y} + Q(y)
\end{equation}
for some $b_{-i} \in \mathbb{R}$ and some polynomial $Q(y)$. In particular, if $P(y) = (a_{-k})^{1/(1-k)}y$, then a straightforward calculation shows that $b_{-k} = 1$ in the expression above. However, we wish to find a polynomial $P$ such that not only does $b_{-k} = 1$ in the expression above, but $(b_{-k}, b_{-k+1}, \dots, b_2) = (1, 0, \dots, 0)$. 
The remainder of the proof will be inductive: assume that we can pick $P = \sum p_iy^i$ so that $P(0) = 0$, $P'(0) > 0$, and $(b_{-k}, b_{-k+1}, \dots, b_{-k+j-1}) = (1, 0, \dots, 0)$ in Equation \ref{eqn_bees} -- we aim to find a new $P$ so that $P(0) = 0$, $P'(0) > 0$, $(b_{-k}, b_{-k+1}, \dots, b_{-k+j}) = (1, 0, \dots, 0)$. 
For $t \in \mathbb{R}$ let $\widetilde{P} = P + tP^{j+1}$, we have for some smooth function $g$,
\begin{align*}
\sum_{i = 1}^{k} a_{-i} \frac{\widetilde{P}'(y)}{\widetilde{P}^i} 
&= \sum_{i = 1}^{k} a_{-i} \frac{P'}{P^i}\frac{(1 + (j+1)tP^{j})}{(1 + tP^j)^i}\\
&= \sum_{i = 1}^{k} a_{-i} \frac{P'}{P^i}\left( 1 + (j + 1 - i)tp_1^jy^j + gy^{j+1} \right) \\
&= \frac{1}{y^k} + \sum_{i = 2}^{k-j} \frac{b_{-i}}{y^{i}} + \frac{a_{-1}}{y} + Q(y)\\
& \ \ \ \ \ \  + \sum_{i = 1}^{k} a_{-i} \frac{P'}{P^i}\left((j + 1 - i)tp_1^jy^j + gy^{j+1}  \right) \\
\end{align*}
Notice that the $y^{-k+j}$ term of the above expression has coefficient
\[
b_{-k+j} + a_{-k} p_1^{1 - k}(j + 1 - k)tp_1^j = 0
\]
if we set $t = -b_{-k+j}p_1^{k - j - 1} / (a_{-k}(j + 1 - k))$, the $y^{-k+j}$ term vanishes, completing the induction.
\end{proof}

\end{document}